\numberwithin{equation}{section}
\newtheorem{theorem}[equation]{Theorem}
\newtheorem{lemma}[equation]{Lemma}
\theoremstyle{definition}
\newtheorem{remark}[equation]{Remark}
\newtheorem{algorithm}{Algorithm}[section]
\newcommand{\dt}{{\Delta t}}
\newcommand{\R}{\mathbb{R}}
\title{Global in time stability and accuracy of IMEX-FEM data assimilation schemes for the Navier-Stokes equations}
\author{
Adam Larios
\thanks{Department of Mathematics, University of Nebraska-Lincoln, Lincoln, NE 68588; 
email: alarios@unl.edu, partially supported by NSF Grant DMS 1716801}
\and
Leo G. Rebholz\thanks{Department of Mathematical Sciences, Clemson University, Clemson, SC, 29634;
email: rebholz@clemson.edu, partially supported by NSF Grant DMS 1522191.}
\and
Camille Zerfas
\thanks{Department of Mathematical Sciences, Clemson University, Clemson, SC, 29634;
email: czerfas@clemson.edu, partially supported by NSF Grant DMS 1522191.}
}
\begin{document}
\date{}
\maketitle

\begin{abstract}
We study numerical schemes for incompressible Navier-Stokes equations using IMEX temporal discretizations, finite element spacial discretizations, and 
equipped with continuous data assimilation (a technique recently developed by Azouani, Olson, and Titi in 2014).   We analyze stability 
and accuracy of the proposed methods, and are able to prove well-posedness, long time stability, and long time accuracy estimates, 
under restrictions of the time step size and data assimilation parameter.  We give results for several numerical tests that illustrate the theory,
and show that, for good results, the choice of discretization parameter and element choices can be critical.
\end{abstract}

\section{Introduction}

Data assimilation (DA) refers to a wide class of schemes for incorporating observational data in simulations, in order to increase the accuracy of solutions and to obtain better estimates of initial conditions.  It is the subject of a large body of work (see, e.g., \cite{Daley_1993_atmospheric_book,Kalnay_2003_DA_book,Law_Stuart_Zygalakis_2015_book}, and the references therein).  DA algorithms are widely used in weather modeling, climate science, and hydrological and environmental forecasting \cite{Kalnay_2003_DA_book}.  Classically, these techniques are based on {\color{black} linear quadratic} estimation, also known as the Kalman Filter.  The Kalman Filter is described in detail in several textbooks, including \cite{Daley_1993_atmospheric_book,Kalnay_2003_DA_book,Law_Stuart_Zygalakis_2015_book,CHJ69}, and the references therein.  

Recently, a promising new approach to data assimilation was pioneered by Azouani, Olson, and Titi \cite{Azouani_Olson_Titi_2014,Azouani_Titi_2014} (see also \cite{Cao_Kevrekidis_Titi_2001,Hayden_Olson_Titi_2011,Olson_Titi_2003} for early ideas in this direction).  This new approach, which we call AOT Data Assimilation or continuous data assimilation, adds a feedback control term at the PDE level that nudges the computed solution towards the reference solution corresponding to the observed data. A similar approach is taken by Bl\"omker, Law, Stuart, and Zygalakis in \cite{Blomker_Law_Stuart_Zygalakis_2013_NL} in the context of stochastic differential equations.  The AOT algorithm is based on feedback control at the PDE (partial differential equation) level, described below.  The first works in this area assumed noise-free observations, but \cite{Bessaih_Olson_Titi_2015} adapted the method to the case of noisy data, and \cite{Foias_Mondaini_Titi_2016} adapted to the case in which  measurements are obtained discretely in time and may be contaminated by systematic errors.   Computational experiments on the AOT algorithm and its variants were carried out in the cases of the Navier-Stokes equations \cite{Gesho_Olson_Titi_2015,Leoni_Mazzino_Biferale_2018}, the B\'enard convection equations \cite{Altaf_Titi_Knio_Zhao_Mc_Cabe_Hoteit_2015}, and the Kuramoto-Sivashinsky equations \cite{Lunasin_Titi_2015,Larios_Pei_2017_KSE_DA_NL}.  In \cite{Larios_Pei_2017_KSE_DA_NL}, several nonlinear versions of this approach were proposed and studied. In addition to the results discussed here, a large amount of recent literature has built upon this idea; see, e.g., \cite{Albanez_Nussenzveig_Lopes_Titi_2016,Biswas_Martinez_2017,Farhat_Jolly_Titi_2015,Farhat_Lunasin_Titi_2016abridged,Farhat_Lunasin_Titi_2016benard,Farhat_Lunasin_Titi_2016_Charney,Farhat_Lunasin_Titi_2017_Horizontal,Foyash_Dzholli_Kravchenko_Titi_2014,GlattHoltz_Kukavica_Vicol_2014,Jolly_Martinez_Titi_2017,Jolly_Sadigov_Titi_2015,Markowich_Titi_Trabelsi_2016}.
Although extensive research has been done on the theory of DA algorithms, there are far fewer papers on the numerical analysis of these algorithms.  We note that a continuous-in-time Galerkin approximation of the algorithm was studied in \cite{Mondaini_Titi_2018_SIAM_NA}.  Also, recently \cite{IMT18} studied a Galerkin in space algorithm with semi-implicit and implicit time-stepping for the 2D Navier-Stokes equations (NSE) which are first-order in time (i.e., Euler methods).

In this paper, we propose and study discrete numerical algorithms of the 3D NSE with an added data assimilation term and grad-div term, and under the assumption that sufficiently regular solutions exist.  In particular, we consider \textit{second order} implicit/explicit (IMEX) time stepping schemes and finite element spacial discretizations.  The semi-implicit scheme we propose and analyze for the 3D NSE (Algorithm \ref{alg1a} below) studied is similar to the algorithm in \cite{IMT18} for the 2D NSE, with one difference being our use of the grad-div stabilization.  The analysis also differs due to the change in dimension.  As far as we are aware, the present work contains the first proposed higher-order time-stepping scheme for the AOT algorithm, and the first numerical analysis of an AOT scheme for the 3D NSE.  In addition, we show that the particular element choice and/or stabilization parameters can make a dramatic difference in the success of the DA algorithm, and the time stepping algorithms also need careful consideration since time step restrictions can arise.  We also show some computational tests of our algorithms in the 2D case in several benchmark settings.  This includes what we believe are the first computational tests of the algorithm for capturing lift and drag in the setting of 2D channel flow past a cylindar, as well as results that show that AOT data assimilation can fail if standard element choices are made, but can work quite will with divergence-free finite elements

Briefly, the incompressible NSE are given by 
\begin{align}
u_t + (u\cdot \nabla )u - \nu \Delta u + \nabla p - \gamma \nabla (\nabla \cdot u) &= f, \label{nse1}
\\ \nabla \cdot u & = 0, \label{nse2} 
\end{align}
where $u$ represents the velocity and $p$ pressure. The viscosity is given by $\nu>0$, and external forcing is $f$. We include a grad-div stabilization term with parameter $\gamma>0$. Note that at the continuous level, this term is zero. The corresponding data assimilation algorithm is given by the system, 
\begin{align}
v_t + (v\cdot \nabla )v + \nabla q - \nu \Delta v + \mu I_H  (v-u) - \gamma \nabla (\nabla \cdot v)  & = f, \label{contDA1}
\\ \nabla \cdot v & = 0, \label{contDA2} 
\end{align}
where $v$ is the approximate velocity and $q$ the pressure of this approximate flow. The viscosity $\nu >0$ and forcing $f$ are the same as the above. The scalar $\mu $ is known as the nudging parameter, and $I_H$ is the interpolation operator, where $H$ is the resolution of the coarse spacial mesh. The added data assimilation term forces (or nudges) the coarse spacial scales of the approximating solution $v$ to the coarse spacial scales of the true solution $u$. The initial value of $v$ is arbitrary. 

We note that in all computational studies discussed above, the equations have been handled with fully explicit schemes (typically forward Euler).  However, in explicit schemes, numerical instability is expected to arise from the term $\mu I_H  (v-u)$ on the right-hand side of \eqref{contDA2} for large values of $\mu$, and thus an implicit treatment of this term has advantages.  Thus, we study a backward Euler scheme for the data assimilation algorithm below.  Fully implicit schemes can be costly though, due to the need to solve nonlinear systems, which can require, e.g., expensive Newton solves at every time step (Newton methods have other theoretical problems, discussed below).  Therefore, we also study implicit-explicit (IMEX) schemes, which handle the nonlinear term semi-implicitly, but the linear terms (in particular, $\mu I_H  (v-u)$) implicitly.

In \cite{Olson_Titi_2008_TCFD}, it is argued (in the context of determining modes) that no higher-order Runge-Kutta-type methods or (fully) implicit methods of order greater than one can be constructed which satisfy the criteria of having the same discrete dynamics for $u$ and $v$, and which use only the information of $I_H(u)$ (as opposed to $u$) in the computation of $v$.  This is the reason why we use backward-differentiation methods, although Adams-Bashforth/Adams-Moulton would also be suitable choices.  We remark that, in the case of implicit methods, such methods do not make sense to use directly as one would need ``knowledge of the future;'' namely, $I_H(u^{n+1})$.  However, by interpreting our simulations as being run ``one time-step in the past,'' so that $I_H(u^{n+1})$ is taken to be the most recent data, not future data that is unmeasured.  The algorithms we propose in this work are consistent with the requirement stated in \cite{Olson_Titi_2008_TCFD} that the right-hand side of the assimilated system not be evaluated more than once per time step.  This is because the algorithms proposed here are only semi-implicit, and therefore do not require repeated solves due to the use of, e.g., Newton methods.
We also note that typically multi-step methods require initializing the first few steps via another method, such as a higher-order Runge-Kutta method.  However, we prove that for \textit{any} initialization of the first few steps, the solutions generated by the algorithm converge to the true solution.  For example, the first few steps could all be initialized to zero.  Thus, algorithms we present below have the advantage of needing no special scheme for the common problem of initializing a multi-step method.


This paper is organized as follows. In section 2, we will introduce the necessary notation and preliminary results needed in the proceeding sections. Section 3 introduces a linear first order scheme of the NSE with a grad-div term. We then show stability of the algorithms and optimal convergence rates of the data assimilation algorithm to the true NSE solution. Similarly, section 4 includes the convergence analysis of a linear second order numerical scheme of the NSE with a data assimilation term, under typical regularity assumptions of the NSE solution. Lastly, section 5 contains three numerical tests that illustrate the optimal convergence rates, and issues that arise in numerical implementation that one may not see from analysis of the scheme. 

\section{Notation and Preliminaries}
We consider a bounded open domain $\Omega \subset \R^d$ with $d$=2 or 3. The $L^2(\Omega)$ norm and inner product will be denoted by $\| \cdot \|$ and $(\cdot, \cdot)$, respectively, while all other norms will be labeled with subscripts. 

Denote the natural function spaces for velocity and pressure, respectively, by
\begin{align*}
X & := H^1_0(\Omega)^d \\
Q & := L^2_0(\Omega).
\end{align*}
In $X$, we have the Poincar\'e inequality: there exists a constant $C_P$ depending only on $\Omega$ such that for any $\phi\in X$,
\[
\| \phi \| \le C_P \| \nabla \phi \|.
\]
The dual norm of $X$ will be denoted by $\| \cdot \|_{-1}$.

We denote the trilinear form $b:X\times X\times X\rightarrow \mathbb{R}$, which is defined on smooth functions $u,v,w$ by
\[
b(u,v,w)=\frac12 (u\cdot\nabla v,w) - \frac12 (u\cdot\nabla w,v).
\]
An equivalent form of $b$ on $X\times X\times X$ can be constructed on smooth functions via
\[ 
b(u,v,w) = (u \cdot \nabla v,w) + \frac{1}{2}((\nabla \cdot u)v, w).  
\]
An important property of the $b$ operator is that $b(u,v,v)=0$ for $u,v\in X$.

We will utilize the following bounds on $b$.
\begin{lemma}\label{bbounds}
There exists a constant $M>0$ dependent only on $\Omega$ satisfying
\begin{align*}
|b(u,v,w)| & \le M \| \nabla u \| \| \nabla v \| \| \nabla w\|, \\
|b(u,v,w)| & \le M \| u \| (\| \nabla v \|_{L^3} + \| v \|_{L^{\infty}} ) \| \nabla w\|,
\end{align*}
for all $u,v,w\in X$ for which the norms on the right hand sides are finite.
\end{lemma}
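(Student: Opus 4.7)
The plan is to apply H\"older's inequality to each of the two terms in the skew-symmetric definition of $b$, and then invoke the standard Sobolev embedding $H^1_0(\Omega)\hookrightarrow L^p(\Omega)$ (valid for $p\le 6$ when $d\le 3$) together with the Poincar\'e inequality to convert any $L^p$ norms back into $L^2$ gradient norms. The constant $M$ will absorb the Sobolev and Poincar\'e constants, both of which depend only on $\Omega$.

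For the first inequality, I would estimate $|(u\cdot\nabla v,w)|$ via H\"older with exponents $(3,2,6)$ to get $\|u\|_{L^3}\|\nabla v\|\|w\|_{L^6}$, and similarly $|(u\cdot\nabla w,v)|\le \|u\|_{L^3}\|\nabla w\|\|v\|_{L^6}$. Since $H^1_0(\Omega)\hookrightarrow L^3(\Omega)$ and $H^1_0(\Omega)\hookrightarrow L^6(\Omega)$ in dimensions $d\le 3$, the $L^3$ and $L^6$ factors are controlled by $\|\nabla u\|$ and $\|\nabla w\|$ (respectively $\|\nabla v\|$). Combining with the factor $\tfrac12$ from the definition of $b$ yields the first bound.

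For the second inequality, the key is to place $u$ itself (not $\nabla u$) in $L^2$. For $(u\cdot\nabla v,w)$ I would apply H\"older with exponents $(2,3,6)$, giving $\|u\|\,\|\nabla v\|_{L^3}\,\|w\|_{L^6}$, and bound $\|w\|_{L^6}\le C\|\nabla w\|$ as above. For $(u\cdot\nabla w,v)$ I would apply H\"older with exponents $(2,2,\infty)$, giving $\|u\|\,\|\nabla w\|\,\|v\|_{L^\infty}$ directly. Summing these two estimates (with the $\tfrac12$'s) gives an overall bound by $M\|u\|(\|\nabla v\|_{L^3}+\|v\|_{L^\infty})\|\nabla w\|$.

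There is no real obstacle here beyond bookkeeping; the only dimension-dependent point is the embedding $H^1_0\hookrightarrow L^6$, which requires $d\le 3$ and is exactly the standing assumption on $\Omega\subset\R^d$. The finiteness hypothesis on the right-hand norms is precisely what ensures each H\"older product is well-defined, so the two estimates hold for all $u,v,w\in X$ for which the relevant norms are finite.
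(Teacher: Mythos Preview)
Your proposal is correct and follows exactly the approach the paper indicates: the paper's proof simply states that these bounds follow from H\"older's inequality, Sobolev inequalities, and the Poincar\'e inequality, and you have supplied the details of precisely that argument.
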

{\color{black} \begin{remark}
Here and throughout, sharper estimates are possible if we restrict to 2D.  However, for simplicity and generality, we do not make this restriction.
	\end{remark}}
\begin{proof}
These well known bounds follow from H\"older's inequality, Sobolev inequalities, and the Poincar\'e inequality.
\end{proof}

\subsection{Discretization preliminaries}

Denote by $\tau_h$ a regular, conforming triangulation of the domain $\Omega$, and let $X_h \subset X$, $Q_h \subset Q$ be an inf-sup stable pair of discrete velocity - pressure spaces. For simplicity, we will take $X_h = X \cap P_k$ and $Q_h = Q \cap P_{k-1}$ Taylor-Hood or Scott-Vogelius elements
however our results in the following sections are extendable to most other inf-sup stable element choices.

We assume the mesh is sufficiently regular for the inverse inequality to hold:  there exists a constant $C$ such that for all $v_h\in X_h$,
\[
\| \nabla v_h \| \le C h^{-1} \| v_h \|.
\]
Define the discretely divergence free subspace by 
\[ 
V_h := \{ v_h \in X_h \,\, |\,\, (\nabla \cdot v_h, q_h) = 0 \,\, \forall \,\, q_h \in Q_h   \} . 
\]

We denote $I_H$ be an interpolation operator satisfying
\begin{align}
\| I_H  (\phi) - \phi \| &\le C_I h \| \nabla \phi\| \label{interp1}
\\ \|I_H(\phi)\| &\leq C \|\phi\| \label{interp2}
\end{align}
for some $C\geq1$, and for all $\phi\in X$.  Here, $H$ is a characteristic point spacing for the interpolant, and will satisfy $h\le H$, $H=ch$.  The spacing $H$ corresponds in practice to points where (true solution) measurements are taken, so $H$ should be as large as possible but still satisfying \eqref{interp1}-\eqref{interp2}.

Throughout this paper, we make the assumption on the mesh width $h$ that it satisfies the data dependent restriction
\[
h <  \sqrt{\frac{2\nu}{ C_I^2 C(data,u)}}.
\]
This will allow for choosing nudging parameters $\mu$ in the interval $(C(data,u),  \frac{\nu}{2}C_I^{-2}h^{-2})$.

We also define the quantity 
\[
\alpha := \nu - 2\mu C_I^2 h^2,
\]
and will assume that $\alpha>0$.  Note that $\mu$ will also have a data dependent lower bound, but choosing $h$ small enough will allow an appropriate $\mu$ to be chose.

\subsection{Additional preliminaries}

Several results in this paper utilize the following inequality for sequences.

\begin{lemma}\label{geoseries}
Suppose constants $r$ and $B$ satisfy $r>1$, $B\ge 0$.  Then if the sequence of real numbers $\{a_n\}$ satisfies 
\begin{align}
ra_{n+1} \le a_n + B, \label{sequence}
\end{align}
we have that
\[
a_{n+1} \le a_0\left(\frac{1}{r}\right)^{n+1}  + \frac{B }{r-1}.
\]
\end{lemma}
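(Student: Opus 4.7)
The plan is to iterate the one-step recurrence and recognize the resulting finite geometric sum. Since $r>1$, dividing the hypothesis through by $r$ gives the equivalent contractive form
\[
a_{n+1} \le \frac{1}{r}\,a_n + \frac{B}{r}.
\]
This is the workhorse inequality: each application trades one factor of $1/r$ on the running ``$a$'' term for one extra summand in a geometric series with ratio $1/r$.

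Next I would prove the claim by induction on $n$. For the base case $n=0$, the one-step inequality reads $a_1 \le \tfrac{1}{r} a_0 + \tfrac{B}{r}$, and since $\tfrac{B}{r} \le \tfrac{B}{r-1}$ (which holds because $r>1$ and $B\ge 0$), we obtain $a_1 \le a_0(1/r)^1 + B/(r-1)$, as required. For the induction step, assume the bound holds for index $n$; then applying the contractive form once more gives
\[
a_{n+2} \le \frac{1}{r}\,a_{n+1} + \frac{B}{r} \le \frac{1}{r}\left(a_0\left(\tfrac{1}{r}\right)^{n+1} + \frac{B}{r-1}\right) + \frac{B}{r} = a_0\left(\tfrac{1}{r}\right)^{n+2} + \frac{B}{r(r-1)} + \frac{B}{r}.
\]
The last two terms combine to $\frac{B}{r-1}$, closing the induction.

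Alternatively, one can unroll the recursion directly in a single line:
\[
a_{n+1} \le \left(\tfrac{1}{r}\right)^{n+1} a_0 + B\sum_{k=1}^{n+1}\left(\tfrac{1}{r}\right)^{k} \le \left(\tfrac{1}{r}\right)^{n+1} a_0 + B \cdot \frac{1/r}{1-1/r} = \left(\tfrac{1}{r}\right)^{n+1} a_0 + \frac{B}{r-1},
\]
where the second inequality uses the sum of the full geometric series (valid because $1/r<1$ and $B\ge0$, so extending the finite sum to infinity only enlarges the right-hand side). I do not expect any real obstacle here; the only point requiring a moment of care is that the sequence $\{a_n\}$ is not assumed to be nonnegative, but the chain of inequalities is preserved under division by the positive constant $r$ and under addition of constants, so no sign hypothesis on the $a_n$ themselves is needed.
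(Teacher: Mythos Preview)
Your proof is correct and essentially matches the paper's own argument: the paper also divides by $r$ to get $a_{n+1}\le a_n/r + B/r$, unrolls the recursion to obtain $a_{n+1}\le a_0/r^{n+1} + (B/r)\sum_{k=0}^{n}(1/r)^k$, and then bounds the finite geometric sum by $B/(r-1)$. Your ``alternative'' direct-unrolling version is exactly the paper's proof; the induction argument you give first is just a slightly different packaging of the same computation.
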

\begin{proof}
The inequality \eqref{sequence} can be written as 
\[
 a_{n+1}\leq \frac{a_n}{r} + \frac{B}{r}. 
 \]
 Recursively, we obtain
 \begin{align*}
 a_{n+1}&\leq  
 \frac{1}{r}\left(\frac{a_{n-1}}{r} + \frac{B}{r}\right)+ \frac{B}{r}
 \\ &=
 \frac{a_{n-1}}{r^2} + \frac{B}{r}\left(1 + \frac{1}{r}\right)
   \\ & \vdots
 \\ & \leq 
 \frac{a_0}{r^{n+1}} + \frac{B}{r}\left(1 + \frac{1}{r}+\cdots+\frac{1}{r^n}\right). 
\end{align*}
Now the resulting finite geometric series is bounded as 
\[
 \frac{B}{r}\left(1 + \frac{1}{r}+\cdots+\frac{1}{r^n}\right)  = \frac{B}{r}\cdot\frac{1-(1/r)^{n+1}}{1-(1/r)}
   \leq 
   \frac{B}{r}\cdot\frac{1}{1-(1/r)}
  \leq 
  \frac{B}{r-1}, 
\]
which gives the result. 
\end{proof}

The analysis in section 4 that uses a BDF2 approximation to the time derivative term will use the $G$-norm, which is commonly used in BDF2 analysis, see e.g. \cite{HW02}, \cite{CGSW13}.  Define the matrix
\begin{align*}
G = \begin{bmatrix}
1/2 & -1 \\ -1 & 5/2
\end{bmatrix},  
\end{align*}
and note that $G$ induces the norm $\|x\|_G^2 := (x, Gx)$, which is equivalent to the $(L^2)^2$ norm:
\[ C_l \|x\|_G \leq \|x\| \leq C_u \|x\|_G  \]
where $C_l=3-2\sqrt{2}$ and $C_l=3+2\sqrt{2}$.
The following property is well-known \cite{HW02}. Set $\chi_v^n := [v^{n-1}, v^n]^T$, if $v^i \in L^2(\Omega)$, $i = n-1, n$, we have 
\begin{align}
\left(\frac{1}{2}(3v^{n+1} - 4v^n + v^{n-1}), v^{n+1}\right) = \frac{1}{2}(\|\chi_v^{n+1}\|_G^{2} - \|\chi_v^n\|_G^2) + \frac{1}{4} \|v^{n+1} - 2v^n + v^{n-1}\|^2 .
\label{Gidentity}
\end{align}

\section{A first order IMEX-FEM scheme and its analysis}

We consider now an efficient fully discretized scheme for \eqref{contDA1}-\eqref{contDA2}.  We use
a first order temporal discretization for the purposes of simplicity of analysis, and in the next section we consider the extension
to second order time stepping.  The time stepping method employed is backward Euler, but linearized at each time step by lagging part of the 
convective term in time.  The spacial discretization is the finite element method, and we assume the velocity-pressure finite element
spaces $(X_h,Q_h)=(P_k,P_{k-1})$ for simplicity (although extension to any LBB-stable pair can be done without significant difficulty).  We 
also utilize grad-div stabilization, with parameter $\gamma>0$, and will assume $\gamma= O(1)$.  For most common element choices, grad-div stabilization is known to improve mass conservation and reduce the effect of the pressure on the velocity error \cite{JLMNR17}; a similar effect is observed in the convergence result for this DA scheme, as well as in the numerical tests.  In this section, we prove well-posedness of the scheme, as well as an error estimate, both of which are uniform in $n$ (global in time), 
provided some restrictions on the nudging parameter and on the time step size.

We now define the first order IMEX discrete DA algorithm for NSE.
\begin{algorithm} \label{alg1a}
	Given any initial condition $v_h^0\in V_h$, forcing $f\in L^{\infty}(0,\infty;L^2(\Omega)),$ and true solution $ u \in L^\infty (0,\infty;L^2(\Omega))$, find $(v_h^{n+1}, q_h^{n+1}) \in (X_h, Q_h)$ for $n = 0,1,2,...$, satisfying
	\begin{eqnarray}
	\frac{1}{\Delta t} \left( v_h^{n+1} - v_h^n,\chi_h \right) + b(v_h^{n},v_h^{n+1},\chi_h) - (q_h^{n+1},\nabla \cdot \chi_h) + \gamma (\nabla \cdot v_h^{n+1}, \nabla  \cdot \chi_h)&& \nonumber \\ + \nu (\nabla v_h^{n+1},\nabla \chi_h) + \mu (I_H(v_h^{n+1} - u^{n+1}),\chi_h)&= & (f^{n+1},\chi_h) \label{femda1_IMEX} \\
	(\nabla \cdot v_h^{n+1},r_h)  &=& 0, \label{femda2_IMEX}
	\end{eqnarray}
	for all $(\chi_h, r_h) \in X_h \times Q_h$. 
	\end{algorithm}
\begin{remark}
	Under some assumptions on the true solution $u$, this algorithm converges to $u$ as $t \to \infty$, independent of $v_h^0$, meaning the initial condition can be chosen arbitrarily. 
\end{remark}

We first prove that Algorithm \ref{alg1a} is well-posed, globally in time, without any restriction on the time step size $\Delta t$.
\begin{lemma} \label{L2stabilityBE}
	Suppose $\mu$, $h$ satisfy
	\[ 
	h< \frac{\sqrt{\nu}}{C_I\sqrt{2}}
	\text{ and }
	1\le \mu <  \frac{\nu}{2C_I^{2}h^{2}}.  
	\]
	Then  for any time step size $\Delta t>0$, Algorithm \ref{alg1a} is well-posed globally in time, and solutions are nonlinearly long-time stable: for any $n>0$,
	\begin{align*}
	\|v_h^{n}\|^2    
	 &\leq  
	 \left( \frac{1}{1+(\lambda+\mu)\Delta t  }\right)^{n} \|v_h^0\|^2 
	 + C( \nu^{-1} F + U) \le C(data),
	\end{align*}	
	where $\lambda = C_P^{-2}\alpha>0$, $F:=\| f \|_{L^{\infty}(0,\infty;H^{-1})}^2$, and
	$U:=\| u \|_{L^{\infty}(0,\infty;L^{2})}^2$.
\end{lemma}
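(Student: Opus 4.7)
The plan is to test \eqref{femda1_IMEX} with $\chi_h = v_h^{n+1}$ and \eqref{femda2_IMEX} with $r_h = q_h^{n+1}$ and add the results. The pressure--velocity pairing cancels, $b(v_h^n, v_h^{n+1}, v_h^{n+1}) = 0$ kills the convection term, the grad-div contribution $\gamma\|\nabla\cdot v_h^{n+1}\|^2 \ge 0$ is discarded, and the polarization identity $2(v_h^{n+1} - v_h^n, v_h^{n+1}) = \|v_h^{n+1}\|^2 - \|v_h^n\|^2 + \|v_h^{n+1} - v_h^n\|^2$ handles the time-derivative pairing (the quadratic remainder also dropped as nonnegative). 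This yields a clean energy inequality whose only nonstandard piece is the nudging term.

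The main difficulty is controlling $\mu(I_H(v_h^{n+1} - u^{n+1}), v_h^{n+1})$, since $I_H$ is not the identity on $X_h$. The plan is to rewrite this as
\begin{align*}
\mu \|v_h^{n+1}\|^2 + \mu(I_H v_h^{n+1} - v_h^{n+1}, v_h^{n+1}) - \mu(I_H u^{n+1}, v_h^{n+1}),
\end{align*}
then apply \eqref{interp1} to the middle piece and \eqref{interp2} to the last. Young's inequality on the middle term produces a dangerous gradient contribution $-\mu C_I^2 h^2 \|\nabla v_h^{n+1}\|^2$ together with some slack $-\tfrac{\mu}{4}\|v_h^{n+1}\|^2$; the data term gives $-\tfrac{\mu}{4}\|v_h^{n+1}\|^2 - C\mu\|u^{n+1}\|^2$. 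The $h^2$-weighted gradient contribution must be absorbed into the viscous term $\nu\|\nabla v_h^{n+1}\|^2$, and after also paying Young's on $(f^{n+1}, v_h^{n+1})$ one is left with a coefficient at least $\alpha/2$ on $\|\nabla v_h^{n+1}\|^2$ and $\mu/2$ on $\|v_h^{n+1}\|^2$. The hypothesis $\mu < \nu/(2C_I^2 h^2)$ is used precisely here, since it is what guarantees $\alpha = \nu - 2\mu C_I^2 h^2 > 0$.

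Applying the Poincar\'e inequality to the surviving $(\alpha/2)\|\nabla v_h^{n+1}\|^2$ converts it into $(\lambda/2)\|v_h^{n+1}\|^2$ with $\lambda = C_P^{-2}\alpha$, and multiplying through by $2\Delta t$ then reorganizes into a recursion of the form
\begin{align*}
\bigl(1 + (\lambda + \mu)\Delta t\bigr)\|v_h^{n+1}\|^2 \le \|v_h^n\|^2 + C\Delta t\bigl(\nu^{-1} F + U\bigr),
\end{align*}
where the constant $C$ absorbs $\alpha^{-1}$, $C_P^2$, and the Young's weights (and uses $\mu \ge 1$ to make $\mu/(\lambda+\mu)$ bounded). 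Lemma \ref{geoseries} applied with $r = 1 + (\lambda+\mu)\Delta t$ then delivers the geometric decay factor, together with an asymptotic ball of radius $B/(r-1) = C(\nu^{-1}F + U)/(\lambda+\mu) \le C(\nu^{-1}F + U)$, matching the stated bound. For well-posedness, note that the scheme is a square linear system in $(v_h^{n+1}, q_h^{n+1}) \in X_h \times Q_h$ at each step; running the same energy argument on its homogeneous version (zero data and $v_h^n = 0$) forces $v_h^{n+1} = 0$, and inf-sup stability of $(X_h, Q_h)$ then forces $q_h^{n+1} = 0$, giving unique solvability at every step.
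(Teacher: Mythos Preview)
Your proposal is correct and follows essentially the same route as the paper's proof: test with $v_h^{n+1}$, split the nudging term by adding and subtracting $v_h^{n+1}$, use \eqref{interp1}--\eqref{interp2} with Young's inequality to absorb $\mu C_I^2 h^2\|\nabla v_h^{n+1}\|^2$ into the viscous term (yielding the $\alpha$ coefficient), apply Poincar\'e, and invoke Lemma~\ref{geoseries}. One small bookkeeping slip: the recursion should read $C\Delta t(\nu^{-1}F + \mu U)$ on the right, with the factor $\mu$ only disappearing \emph{after} Lemma~\ref{geoseries} via $\mu/(\lambda+\mu)\le 1$ (as your own parenthetical remark indicates); the stray ``$C$ absorbs $\alpha^{-1}$'' is not needed.
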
 


\begin{proof}
	Since the scheme is linear and finite dimensional, proving the stability bound in Lemma \ref{L2stabilityBE} will imply global-in-time well-posedness.

	We begin the proof for the stability bound by choosing $\chi_h = v_h^{n+1}$ in Algorithm \ref{alg1a}, which vanishes the pressure and nonlinear terms. We then add and subtract $v_h^{n+1}$ in the first component of the nudging term, which yields (after dropping the non-negative terms $\gamma \| \nabla \cdot v_h^{n+1}\|^2$ and $\frac{1}{2\Delta t} \|v_h^{n+1} - v_h^n\|^2$  on the left)
	\begin{align}
	\frac{1}{2\dt} [ \|v_h^{n+1}\|^2 - \|v_h^n\|^2 &  ] + \nu \|\nabla v_h^{n+1}\|^2 + \mu\|v_h^{n+1}\|^2 \nonumber
	\\ & \le (f^{n+1}, v_h^{n+1}) - \mu(I_H v_h^{n+1} - v_h^{n+1}, v_h^{n+1}) + \mu ( I_H   u^{n+1}, v_h^{n+1}).
	\label{stab1}
	\end{align}
	 The first term on the right hand side is bounded using the dual norm of $X$ and Young's inequality, which yields
	\begin{align*}
	(f^{n+1}, v_h^{n+1}) &\leq \|f^{n+1}\|_{-1}\|\nabla v_h^{n+1}\|
	 \leq \frac{ \nu^{-1}}{2} \|f^{n+1}\|_{-1}^2 + \frac{\nu}{2}\|\nabla v_h^{n+1}\|^2.
	\end{align*}
	The second term is bounded using Cauchy-Schwarz, interpolation property \eqref{interp1}, and Young's inequality, after which we have that 
	\begin{align*}
	\mu(I_H v_h^{n+1} - v_h^{n+1}, v_h^{n+1}) &\leq \mu\|I_H v_h^{n+1} - v_h^{n+1}\| \|v_h^{n+1}\|
	\\ & \leq \mu C_I^2 h^2 \|\nabla v_h^{n+1}\|^2 + \frac{\mu}{4}\|v_h^{n+1}\|^2.
	\end{align*}
	Finally, the last right hand side term will be bounded with these same inequalities, and property \eqref{interp2}, to obtain
	\begin{align*}
	\mu (I_Hu^{n+1}, v_h^{n+1}) &\leq \mu\|I_Hu^{n+1}\| \| v_h^{n+1}\|
	\\ & \leq C \mu \|u^{n+1}\|^2 + \frac{\mu}{4} \| v_h^{n+1}\|^2.
	\end{align*}
	Combine the above bounds for the right hand side of \eqref{stab1}, then multiply both sides by $2\Delta t$ and reduce, to get
	\begin{align}
	\|v_h^{n+1}\|^2 - \|v_h^n\|^2    +  \alpha \dt \|\nabla v_h^{n+1}\|^2 + \mu & \dt \|v_h^{n+1}\|^2 \nonumber 
	\\ & \leq  \nu^{-1} \dt\|f^{n+1}\|_{-1}^2 + C \mu\dt \|u^{n+1}\|^2,
	\label{stab2}
	\end{align}
	recalling that $\alpha= \nu - 2 \mu C_I^2h^2>0$.  Applying the Poincar\'e inequality to the viscous term, denoting $\lambda=C_P^{-2}\alpha$, and using the assumed regularity of $f$ and $u$ provides the bound
	\begin{align*}
	(1+(\lambda+\mu)\Delta t) \|v_h^{n+1}\|^2    
	 \leq  \|v_h^n\|^2 + \dt C( \nu^{-1} F +  \mu U). 
	\end{align*}
	Next apply Lemma \ref{geoseries} to find that 
	\begin{align*}
	\|v_h^{n+1}\|^2    
	 &\leq  
	 \left( \frac{1}{1+(\lambda+\mu)\Delta t  }\right)^{n+1} \|v_h^0\|^2 
	 + \frac{  \Delta t C( \nu^{-1} F +  \mu U)}{ \Delta t (\lambda + \mu) } \\
	 &\leq  
	 \left( \frac{1}{1+(\lambda+\mu)\Delta t  }\right)^{n+1} \|v_h^0\|^2 
	 + C(\lambda + \mu)^{-1} \nu^{-1} F + CU.
	\end{align*}
	Finally, since we assume $\mu\ge 1$, we obtain a bound for $v_h^{n+1}$, uniform in $n$:
	\begin{align}\label{Euler_stab_bound}
	\|v_h^{n+1}\|^2    
	 &\leq  
	 \left( \frac{1}{1+(\lambda+\mu)\Delta t  }\right)^{n+1} \|v_h^0\|^2 
	 + C( \nu^{-1} F + U) \le C(data).
	\end{align}	
	
	A similar stability bound can be used to show that solutions at each time step are unique, since the difference between two solutions satisfies the same bound as \eqref{Euler_stab_bound}, except with $F=U=0$.  Since the scheme is linear
	and finite dimensional at each time step, this also implies existence and uniqueness.  Finally, since the stability bound is
	uniform in $n$, we have global in time well-posedness of the scheme.
	\end{proof}

We will now prove that solutions to Algorithm \ref{alg1a} converge to the true NSE solution at a rate of $\dt + h^k$, globally in time, provided restrictions on $\dt$ and $\mu$ are satisfied.
\begin{theorem} \label{conv2}
	Let $u,p$ solve the NSE \eqref{nse1}-\eqref{nse2} with given $f\in L^{\infty}(0,\infty;L^2(\Omega))$ and $u_0\in L^2(\Omega)$, with $u \in L^\infty (0, \infty; H^{k+1}(\Omega))$, $p \in L^\infty (0, \infty; H^{k}(\Omega))$ ($k\ge 1$), $u_{t} \in L^{\infty}(0,\infty;L^2(\Omega))$, and $u_{tt} \in L^\infty(0, \infty; H^1(\Omega))$.  Denote $U:= | u |_{L^{\infty}(0,\infty;H^{k+1})}$ and $P:= | p |_{L^{\infty}(0,\infty;H^{k})}$.  Assume the time step size satisfies
\[
\Delta t <  CM^2\nu^{-1} \left( h^{2k-2} U^2  +   \|\nabla u^{n+1}\|^2_{L^3} + \|u^{n+1}\|_{L^\infty}^2 \right)^{-1},
\]
and the parameter $\mu$ satisfies
\[
 CM^2\nu^{-1} \bigg(  h^{2k-2} U^2 +  \|\nabla u^{n+1}\|_{L^3}^2 + \| u^{n+1}\|_{L^{\infty}}^2  \bigg) < \mu < \frac{2\nu}{C_I^2 h^2}.
\]
Then the error in solutions to Algorithm \ref{alg1a} satisfies, for any $n$,	
\[
\| u^n - v_h^n \|^2 \le \left( \frac{1}{1+2\lambda\Delta t} \right)^{n} \| u_0 - v_h^0 \|^2 + \frac{R}{\lambda},
\] 
where $\lambda = \alpha C_P^{-2}$ and
	\[
	R =C\bigg( (1+M^2)\nu^{-1} \dt^2 
	+  h^{2k} U^2 ( M^2\nu^{-1} + M^2\nu^{-1}h^{2k}U^2 + \nu + \gamma + \nu C_I^{-2} )\bigg).
	\]	
\end{theorem}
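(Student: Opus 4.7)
The plan is to run a standard finite element consistency argument for the nudged NSE, using the same nudging absorption trick as in Lemma~\ref{L2stabilityBE}, and then close the global-in-time recursion via Lemma~\ref{geoseries}. First I would split the error through the grad-div Stokes projection $\tilde u^n \in V_h$ of $u^n$, setting $\eta^n := u^n - \tilde u^n$ and $\phi_h^n := \tilde u^n - v_h^n \in V_h$; standard approximation estimates give $\|\eta^n\| + h\|\nabla\eta^n\| \le C h^{k+1}U$ together with the $L^3$/$L^\infty$ bounds I will need below. Evaluating the NSE in variational form at $t^{n+1}$, subtracting \eqref{femda1_IMEX}, testing against $\chi_h = \phi_h^{n+1}$, and choosing $r_h \in Q_h$ as a best approximation of $p^{n+1}$, I obtain
\[
\tfrac{1}{\dt}(\phi_h^{n+1}-\phi_h^n,\phi_h^{n+1}) + \nu\|\nabla\phi_h^{n+1}\|^2 + \gamma\|\nabla\cdot\phi_h^{n+1}\|^2 + \mu(I_H\phi_h^{n+1},\phi_h^{n+1}) = \mathrm{RHS},
\]
where $\mathrm{RHS}$ gathers the time truncation $(u_t - (u^{n+1}-u^n)/\dt,\phi_h^{n+1}) = O(\dt)$ (via $u_{tt}\in H^1$), the $\eta$-residuals (coming from the mass, viscous, grad-div terms), the pressure error of order $h^k P$, the nudging-interpolation contribution $\mu(I_H\eta^{n+1},\phi_h^{n+1})$, and the nonlinear discrepancy $b(v_h^n,v_h^{n+1},\phi_h^{n+1}) - b(u^{n+1},u^{n+1},\phi_h^{n+1})$.

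Next I would handle the nudging term on the LHS exactly as in Lemma~\ref{L2stabilityBE}: writing $I_H\phi_h^{n+1} = \phi_h^{n+1} - (\phi_h^{n+1} - I_H\phi_h^{n+1})$ and applying \eqref{interp1} with Young's inequality yields $\mu\|\phi_h^{n+1}\|^2$ on the LHS at the cost of $\mu C_I^2 h^2\|\nabla\phi_h^{n+1}\|^2$, leaving $\alpha\|\nabla\phi_h^{n+1}\|^2$ of usable viscous dissipation; a Poincar\'e split of part of this then produces the coefficient $2\lambda\|\phi_h^{n+1}\|^2$ that will drive the geometric decay.

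The main work is the nonlinear discrepancy. Writing $v_h^n - u^{n+1} = -(\eta^n+\phi_h^n) - (u^{n+1}-u^n)$ and $v_h^{n+1}-u^{n+1} = -(\eta^{n+1}+\phi_h^{n+1})$, expanding, and using $b(\cdot,\phi_h^{n+1},\phi_h^{n+1})=0$ repeatedly, leaves a handful of residuals. Benign ones, involving $\eta^n$ or $u^{n+1}-u^n$ paired against well-controlled objects, contribute only $O(h^{2k})$ and $O(\dt^2)$ terms to $R$ via the bounds of Lemma~\ref{bbounds}. The dangerous pieces are $b(\phi_h^n,u^{n+1},\phi_h^{n+1})$ and $b(\phi_h^n,\eta^{n+1},\phi_h^{n+1})$; using the second bound of Lemma~\ref{bbounds} and Young yields a contribution of the form
\[
C\nu^{-1} M^2 \bigl(\|\nabla u^{n+1}\|_{L^3}^2 + \|u^{n+1}\|_{L^\infty}^2 + h^{2k-2}U^2\bigr)\|\phi_h^n\|^2 + \tfrac{\nu}{C}\|\nabla\phi_h^{n+1}\|^2.
\]
Since $\phi_h^n$ (not $\phi_h^{n+1}$) appears, I would use $\|\phi_h^n\|^2 \le 2\|\phi_h^{n+1}\|^2 + 2\|\phi_h^{n+1}-\phi_h^n\|^2$: the first piece is absorbed into $\mu\|\phi_h^{n+1}\|^2$ (forcing the stated lower bound on $\mu$), while the second is absorbed into the discrete-time-derivative term $\tfrac{1}{2\dt}\|\phi_h^{n+1}-\phi_h^n\|^2$ produced by $2(\phi_h^{n+1}-\phi_h^n,\phi_h^{n+1}) = \|\phi_h^{n+1}\|^2-\|\phi_h^n\|^2+\|\phi_h^{n+1}-\phi_h^n\|^2$ (forcing the stated upper bound on $\dt$).

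Collecting everything produces the one-step recursion $(1+2\lambda\dt)\|\phi_h^{n+1}\|^2 \le \|\phi_h^n\|^2 + \dt R$ with $R$ in the stated form; applying Lemma~\ref{geoseries} with $r = 1 + 2\lambda\dt$, $B = \dt R$ yields $\|\phi_h^{n+1}\|^2 \le (1+2\lambda\dt)^{-(n+1)}\|\phi_h^0\|^2 + R/(2\lambda)$, and a final triangle inequality against $\eta^n$ (itself of size already absorbed in $R$) delivers the claim on $\|u^n - v_h^n\|^2$. The hard part throughout will be the simultaneous coefficient bookkeeping: the nonlinear estimates must contribute coefficients strictly below $\mu$ to $\|\phi_h^{n+1}\|^2$, strictly below $\alpha$ to $\|\nabla\phi_h^{n+1}\|^2$, and strictly below $1/(2\dt)$ to $\|\phi_h^{n+1}-\phi_h^n\|^2$ all at once, and the stated $\mu$- and $\dt$-thresholds are exactly what this bookkeeping demands.
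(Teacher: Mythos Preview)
Your plan is essentially the paper's argument, with the same nudging absorption, the same splitting of the dangerous nonlinear piece into a $\phi_h^{n+1}$ part and a $\phi_h^{n+1}-\phi_h^n$ part (the paper adds and subtracts $\phi_h^{n+1}$ in the first slot of $b(\phi_h^n,u^{n+1},\phi_h^{n+1})$ before bounding, which is equivalent to your post-hoc use of $\|\phi_h^n\|^2\le 2\|\phi_h^{n+1}\|^2+2\|\phi_h^{n+1}-\phi_h^n\|^2$), and the same closing via Lemma~\ref{geoseries}.

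Two differences are worth flagging. First, the paper uses the $L^2$ projection $P_{V_h}^{L^2}$ for the error splitting, not a Stokes-type projection; this is deliberate, since it makes $(\eta^{n+1}-\eta^n,\phi_h^{n+1})=0$ and the ``mass $\eta$-residual'' you list simply disappears. With your grad-div Stokes projection that term survives, and controlling $\tfrac{1}{\Delta t}\|\eta^{n+1}-\eta^n\|$ at the required $O(h^{k+1})$ rate would need $u_t\in L^\infty(0,\infty;H^{k+1})$, which is not among the stated hypotheses. Second, for $b(v_h^n,\eta^{n+1},\phi_h^{n+1})$ the paper does not invoke $L^3/L^\infty$ bounds on $\eta$; instead it writes $\|\nabla v_h^n\|\le \|\nabla u^n\|+\|\nabla\eta^n\|+\|\nabla(\phi_h^{n+1}-\phi_h^n)\|+\|\nabla\phi_h^{n+1}\|$, applies the inverse inequality to the last two pieces, and this is where the $h^{2k-2}U^2$ factors in the $\mu$- and $\Delta t$-thresholds originate. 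Apart from these points the bookkeeping you describe matches the paper's.
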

\begin{remark}
For the case of Taylor-Hood elements and initial condition $v_h^0\equiv0$ in the DA algorithm, the result of the theorem reduces to
\[
\| u^n - v_h^n \| \le C \left(
\left( \frac{1}{1+2\lambda\Delta t} \right)^{n/2} \| u_0 \|
+ 
\Delta t + h^k
\right).
\]
\end{remark}
\begin{remark}
The time step restriction is a consequence of the IMEX time stepping.  If we instead consider the fully nonlinear scheme, i.e. with $b(v_h^n,v_h^{n+1},\chi_h)$ replaced by $b(v_h^{n+1},v_h^{n+1},\chi_h)$, then 
the time restriction does not arise here, but arises in the analysis of well-posedness.
\end{remark}
\begin{remark}
Similar to the case of NSE-FEM without DA, grad-div stabilization reduces the effect of the pressure on the $L^2(\Omega)$ DA solution error.  With grad-div, the contribution of the error term is $h^{k} \gamma^{-1/2} 	 | p |_{L^{\infty}(0,\infty;H^{k})} $, but without it, the $\gamma^{-1/2}$ would be replaced by a $\nu^{-1/2}$.  If divergence-free elements were used, then this term would completely vanish.
\end{remark}

\begin{proof}
Throughout this proof, the constant $C$ will denote a generic constant, possibly changing at each instance, that is independent of $h$, $\mu$, and $\dt$.

	Using Taylor's theorem, the NSE (true) solution satisfies, for all  $\chi_h\in X_h$,
		\begin{multline}
	\frac{1}{\Delta t} \left( u^{n+1} - u^n,\chi_h \right) + b(u^{n},u^{n+1},\chi_h) - (p^{n+1},\nabla \cdot \chi_h) + \gamma (\nabla \cdot u^{n+1}, \nabla \cdot \chi_h)  \\ + \nu (\nabla u^{n+1},\nabla \chi_h)=  (f^{n+1},\chi_h)
	+ \frac{\dt}{2} (u_{tt}(t^*),\chi_h) + \dt b(u_t(t^{**}) ,u^{n+1},\chi_h), \label{nsetrue3}
		\end{multline}
	where $t^*, t^{**} \in [t^n, t^{n+1}]$.  Subtracting \eqref{femda1_IMEX} from \eqref{nsetrue3} yields the following difference equation, with $e^n:=u^n - v_h^n$:
		\begin{multline}
	\frac{1}{\Delta t} \left( e^{n+1} - e^n,\chi_h \right) + b(e^{n},u^{n+1},\chi_h) + b(v_h^n,e^{n+1},\chi_h) 
	- (p^{n+1} - q_h^{n+1},\nabla \cdot \chi_h) + \gamma (\nabla \cdot e^{n+1}, \nabla \cdot \chi_h)  \\ + \nu (\nabla e^{n+1},\nabla \chi_h) + \mu (I_H(e^{n+1}),\chi_h)=  
	 \frac{\dt}{2} (u_{tt}(t^*),\chi_h) + \dt b(u_t(t^{**}) ,u^{n+1},\chi_h). \label{daerr1}
		\end{multline}
	Denote $P_{V_h}^{L^2}(u^n)$ as the $L^2$ projection of $u^n$ into the discretely divergence-free space $V_h$.  We decompose the error as $e^n = (u^n - P_{V_h}^{L^2}(u^n)) + (P_{V_h}^{L^2}(u^n) - v_h^n) =: \eta^n + \phi_h^n$, and then choose
$\chi_h = \phi_h^{n+1}$, which yields
	\begin{align}
	\frac{1}{2\dt}[ \|\phi_h^{n+1}\|^2&  - \|\phi_h^n\|^2 + \|\phi_h^{n+1} - \phi_h^n\|^2 ] + \nu\|\nabla \phi_h^{n+1}\|^2 + \mu (I_H(\phi_h^{n+1}),\phi_h^{n+1})  + \gamma \|\nabla \cdot \phi_h^{n+1}\|^2 \nonumber
	\\ & = \frac{\dt}{2}(u_{tt}(t^*) , \phi_h^{n+1}) 
	+ \dt b(u_t(t^{**}), u^{n+1}, \phi_h^{n+1})
	- b( v_h^n, \eta^{n+1}, \phi_h^{n+1}) \nonumber
	\\ & \,\,\,\, 
	- b(\eta^n, u^{n+1}, \phi_h^{n+1})
	- b(\phi_h^n, u^{n+1}, \phi_h^{n+1})
	-  \mu (I_H(\eta^{n+1}),\phi_h^{n+1})
		\nonumber
	\\ & \,\,\,\, 
	+ (p^{n+1}-r_h, \nabla \cdot \phi_h^{n+1})
	- \nu (\nabla \eta^{n+1}, \nabla \phi_h^{n+1})
	- \gamma (\nabla \cdot \eta^{n+1}, \nabla \cdot \phi_h^{n+1}), \label{phidiff0}
	\end{align}
	where $r_h$ is chosen arbitrarily in $Q_h$ since $\phi_h^{n+1}$ is discretely divergence free. We also used that $b(v_h^{n},\phi_h^{n+1},\phi_h^{n+1})=0$ and $(\eta^{n+1} - \eta^n,\phi_h^{n+1})=0$.  

	Many of these terms are bounded in a similar manner as in the case of backward Euler FEM for NSE, for example as in \cite{GR86, laytonbook,T77}.  Using these techniques (which mainly consist of carefully constructed Young and Cauchy-Schwarz inequalities and Lemma \eqref{bbounds}), we majorize all right side terms except the fifth and sixth terms to obtain
	\begin{align*}
	\frac{1}{2\dt}[ \|\phi_h^{n+1}\|^2&  - \|\phi_h^n\|^2 + \|\phi_h^{n+1} - \phi_h^n\|^2 ] + \frac{11\nu}{16}  \|\nabla \phi_h^{n+1}\|^2 + \mu (I_H(\phi_h^{n+1}),\phi_h^{n+1})  + \frac{\gamma}{2} \|\nabla \cdot \phi_h^{n+1}\|^2 \nonumber
	\\ & \le
	 \nu^{-1} \dt^2  \| u_{tt}(t^*) \|_{-1}^2   
	 + 4M^2\nu^{-1} \dt^2  \|\nabla u_{t}(t^*) \|^2  \|\nabla u^n \|^2 
	 + 4M^2\nu^{-1} \| \nabla v_h^n \|^2 \| \nabla \eta^{n+1} \|^2 \nonumber
	\\ & \,\,\,\, 
	+ 4M^2 \nu^{-1} \| \nabla \eta^n \|^2 \| \nabla u^{n+1} \|^2
	- b(\phi_h^n, u^{n+1}, \phi_h^{n+1})
	-  \mu (I_H(\eta^{n+1}),\phi_h^{n+1})
		\nonumber
	\\ & \,\,\,\, 
	+ \gamma^{-1} \| p^{n+1}-r_h \|^2
	+ 4 \nu \| \nabla \eta^{n+1} \|^2
	+ \gamma^{-1} \|\nabla \cdot \eta^{n+1} \|^2.
	\end{align*}
Adding and subtracting $\phi_h^{n+1}$ to $I_H(\phi_h^{n+1})$, and using regularity assumptions on $u$, the bound reduces to  
	\begin{align}
	\frac{1}{2\dt}[ \|\phi_h^{n+1}\|^2&  - \|\phi_h^n\|^2 + \|\phi_h^{n+1} - \phi_h^n\|^2 ] + \frac{11\nu}{16} \|\nabla \phi_h^{n+1}\|^2 + \mu \| \phi_h^{n+1} \|^2   + \frac{\gamma}{2} \|\nabla \cdot \phi_h^{n+1}\|^2 \nonumber
	\\ & \le
	 C(1+M^2)\nu^{-1} \dt^2 
	 + 4M^2\nu^{-1} \| \nabla v_h^n \|^2 \| \nabla \eta^{n+1} \|^2 
	+ 4CM^2 \nu^{-1} \| \nabla \eta^n \|^2 
	 \nonumber
	\\ & \,\,\,\, 
	+ \gamma^{-1} \| p^{n+1}-r_h \|^2
	+ 4 \nu \| \nabla \eta^{n+1} \|^2
	+ \gamma \|\nabla \cdot \eta^{n+1} \|^2  \nonumber \\
	&\ \ \ 	+ | b(\phi_h^n, u^{n+1}, \phi_h^{n+1}) |
	+  \mu | (I_H(\eta^{n+1}),\phi_h^{n+1}) |
	+ \mu | (I_H(\phi_h^{n+1}) - \phi_h^{n+1},\phi_h^{n+1}) |
	. \label{phidiff2}
	\end{align}

To bound the third to last term in \eqref{phidiff2}, we begin by adding and subtracting $\phi_h^{n+1}$ to its first argument, and get
\begin{equation}
| b(\phi_h^n, u^{n+1}, \phi_h^{n+1}) | \le | b(\phi_h^{n+1} - \phi_h^n, u^{n+1}, \phi_h^{n+1}) | + | b(\phi_h^{n+1}, u^{n+1}, \phi_h^{n+1}) |. \label{dp1}
\end{equation}
For both terms in \eqref{dp1}, we use Lemma \ref{bbounds} and Young's inequality to obtain the bounds
	\begin{align*}
	| b(\phi_h^n - \phi_h^{n+1}, u^{n+1}, \phi_h^{n+1}) | 
	 & \leq M \|\phi_h^n - \phi_h^{n+1}\| \left( \| \nabla u^{n+1}\|_{L^3} + \|u^{n+1}\|_{L^\infty}\right)  \|\nabla \phi_h^{n+1}\| 
	\\ & 
	\leq 4M^2 \nu^{-1}\|\phi_h^{n+1} - \phi_h^{n}\|^2 (\|\nabla u^{n+1}\|^2_{L^3} + \|u^{n+1}\|_{L^\infty}^2 ) + \frac{\nu}{16} \|\nabla \phi_h^{n+1}\|^2, 
	\end{align*}
	and	
	\begin{align*}
	|b(\phi_h^{n+1}, u^{n+1}, \phi_h^{n+1})| & \leq  C \|\phi_h^{n+1}\| \left( \|\nabla u^{n+1}\|_{L^3} + \| u^{n+1}\|_{L^{\infty}} \right) \|\nabla \phi_h^{n+1}\| \\
	& \leq 4M^2 \nu^{-1} \| \phi_h^{n+1} \|^2 \left( \|\nabla u^{n+1}\|_{L^3}^2 + \| u^{n+1}\|_{L^{\infty}}^2 \right) + \frac{\nu}{16} \| \nabla \phi_h^{n+1}\|^2.
	\end{align*}
	For the second to last term in \eqref{phidiff2}, Cauchy-Schwarz and Young inequalities, along with \eqref{interp2}, imply that
	\begin{align*}
	\mu(I_H(\eta^{n+1}), \phi_h^{n+1}) &\leq \mu\|I_H(\eta^{n+1})\|\|\phi_h^{n+1}\| 
	\\ & \leq C \mu\| \eta^{n+1}\|^2 + \frac{\mu}{4} \|\phi_h^{n+1}\|^2.
	\end{align*}
	For the last term in \eqref{phidiff2}, we apply Cauchy-Schwarz and Young's inequalities and  \eqref{interp1}  to get 
	\begin{align*}
	\mu \big| (I_H(\phi_h^{n+1}) - \phi_h^{n+1}, \phi_h^{n+1})\big|  &\leq \mu\|I_H(\phi_h^{n+1})- \phi_h^{n+1}\|\|\phi_h^{n+1}\| 
	\\ & \leq \mu C_Ih\|\nabla \phi_h^{n+1}\|\|\phi_h^{n+1}\|
	\\ & \leq \mu C_I^2h^2 \|\nabla \phi_h^{n+1}\|^2 + \frac{\mu}{4}\|\phi_h^{n+1}\|^2.
	\end{align*}
Combining the above bounds, and recalling the definition of $\alpha$, reduces \eqref{phidiff2} to
	\begin{align}
	\frac{1}{2\dt}[ \|\phi_h^{n+1}\|^2&  - \|\phi_h^n\|^2 + \|\phi_h^{n+1} - \phi_h^n\|^2 ] +  \frac{\gamma}{2} \|\nabla \cdot \phi_h^{n+1}\|^2 \nonumber \\
	+  \frac{\alpha}{2} \|\nabla &\phi_h^{n+1}\|^2 
	+ \left( \frac{\mu}{2} - 4M^2\nu^{-1} \left( \|\nabla u^{n+1}\|_{L^3}^2 + \| u^{n+1}\|_{L^{\infty}}^2 \right) \right)  \| \phi_h^{n+1} \|^2   
	 \nonumber
	\\ & \le
	 C(1+M^2)\nu^{-1} \dt^2 
	 + 4M^2\nu^{-1} \| \nabla v_h^n \|^2 \| \nabla \eta^{n+1} \|^2 
	+  4CM^2\nu^{-1} \| \nabla \eta^n \|^2 
	 \nonumber
	\\ & \,\,\,\, 
	+ \gamma^{-1} \| p^{n+1}-r_h \|^2
	+ 4\nu \| \nabla \eta^{n+1} \|^2
	+ \gamma \|\nabla \cdot \eta^{n+1} \|^2  \nonumber \\
	&\ \ \ 	
	+ C \mu\| \eta^{n+1}\|^2 + 4M^2 \nu^{-1}\|\phi_h^{n+1} - \phi_h^{n}\|^2 (\|\nabla u^{n+1}\|^2_{L^3} + \|u^{n+1}\|_{L^\infty}^2 )
	. \label{phidiff2b}
	\end{align}

	It remains to estimate the term $4M^2\nu^{-1} \| \nabla v_h^n \|^2 \| \nabla \eta^{n+1} \|^2$.  By adding and subtracting $u^n$ to $v_h^n$ and using the triangle inequality, we note that
	\[
	\| \nabla v_h^n \| \le \| \nabla u^n \| + \| \nabla \eta^n \| + \| \nabla (\phi_h^{n+1} - \phi_h^n) \| + \| \nabla \phi_h^{n+1} \|.
\]
Using interpolation estimates, we obtain the bound
\begin{eqnarray*}
 \| \nabla v_h^n \|^2 \| \nabla \eta^{n+1} \|^2
& \le &
C\bigg( \| \nabla u^n \|^2 \| \nabla \eta^{n+1} \|^2
 +
  \| \nabla \eta^n \|^2 \| \nabla \eta^{n+1} \|^2 \\
&&  +
   \| \nabla (\phi_h^{n+1} - \phi_h^n) \|^2 \| \nabla \eta^{n+1} \|^2
   +
    \| \nabla \phi_h^{n+1} \|^2 \| \nabla \eta^{n+1} \|^2 \bigg) \\
    & \le &
    C\bigg( h^{2k} U^2 + h^{4k} U^4 
     + h^{2k-2} U^2   \| \phi_h^{n+1} - \phi_h^n  \|^2 
    + h^{2k-2} U^2 \| \phi_h^{n+1} \|^2  \bigg),
    \end{eqnarray*}
	where in the last step we used the inverse inequality.  Using this in \eqref{phidiff2b} gives us that
	\begin{align}
	\frac{1}{2\dt}[ \|\phi_h^{n+1}\|^2&  - \|\phi_h^n\|^2] +  \frac{\gamma}{2} \|\nabla \cdot \phi_h^{n+1}\|^2 +   \frac{\alpha}{2} \|\nabla \phi_h^{n+1}\|^2  \nonumber \\
	 + \|\phi_h^{n+1} - & \phi_h^n\|^2 \left(  \frac{1}{2\Delta t} -CM^2\nu^{-1} h^{2k-2} U^2  -     4M^2 \nu^{-1} \|\nabla u^{n+1}\|^2_{L^3} + \|u^{n+1}\|_{L^\infty}^2 \right)
	  \nonumber \\
	+ \bigg( \frac{\mu}{2} - & CM^2\nu^{-1} h^{2k-2} U^2 -4M^2\nu^{-1} \left( \|\nabla u^{n+1}\|_{L^3}^2 + \| u^{n+1}\|_{L^{\infty}}^2 \right) \bigg)  \| \phi_h^{n+1} \|^2   
	 \nonumber
	\\ & \le
	 C(1+M^2)\nu^{-1} \dt^2 
	 + CM^2\nu^{-1}h^{2k}U^2 
	 + CM^2\nu^{-1}h^{4k}U^4
	+  CM^2\nu^{-1} \| \nabla \eta^n \|^2 
	 \nonumber
	\\ & \,\,\,\, 
	+ \gamma^{-1} \| p^{n+1}-r_h \|^2
	+ 4\nu \| \nabla \eta^{n+1} \|^2
	+ \gamma \|\nabla \cdot \eta^{n+1} \|^2 
	+ C \frac{\nu}{C_I^2 h^2 } \| \eta^{n+1}\|^2 	. \label{phidiff2c}
	\end{align}
Using the assumptions on $\mu$ and $\dt$, we reduce the left hand side of \eqref{phidiff2c} by dropping non-negative terms and using the Poincar\'e inequality, and the right hand side using interpolation properties for $\eta^n$ to obtain
	\begin{align}
	\frac{1}{2\dt}&[ \|\phi_h^{n+1}\|^2  - \|\phi_h^n\|^2]   +  \frac{\alpha}{2} C_P^{-2} \| \phi_h^{n+1}\|^2  \nonumber \\
 & \le
	 C(1+M^2)\nu^{-1} \dt^2 
	 + CM^2\nu^{-1}h^{2k}U^2 
	 + CM^2\nu^{-1}h^{4k}U^4
	+  CM^2\nu^{-1} h^{2k}U^2 
	 \nonumber
	\\ & \,\,\,\, 
	+ C\gamma^{-1} h^{2k}P^2
	+ C\nu h^{2k} U^2
	+C \gamma h^{2k} U^2
	+ C \frac{\nu}{C_I^2 h^2 } h^{2k+2} U^2 \nonumber \\
& \le
	C\bigg( (1+M^2)\nu^{-1} \dt^2 
	+  h^{2k} U^2 ( M^2\nu^{-1} + M^2\nu^{-1}h^{2k}U^2 + \nu + \gamma + \nu C_I^{-2} )
	+ \gamma^{-1} h^{2k}P^2 \bigg).
	\label{phidiff2d}
	\end{align}
	Define the parameter $\lambda=\alpha C_P^{-2}$, and reduce \eqref{phidiff2d} to
			\begin{align}
	(1+\lambda \Delta t) \|\phi_h^{n+1}\|^2   
	 \le  
	 \|\phi_h^n\|^2 
	 + \Delta t R,
	 \label{phidiff2e}
	\end{align}
	where
	\[
	R =C\bigg( (1+M^2)\nu^{-1} \dt^2 
	+  h^{2k} U^2 ( M^2\nu^{-1} + M^2\nu^{-1}h^{2k}U^2 + \nu + \gamma + \nu C_I^{-2} )+ \gamma^{-1} h^{2k}P^2\bigg).
	\]	
Now using Lemma \ref{geoseries}, we obtain
\[
\| \phi_h^{n+1} \|^2 \le \left( \frac{1}{1+2\lambda\Delta t} \right)^{n+1} \| \phi_h^0 \|^2 + \frac{R}{\lambda}.
\] 
Applying the triangle inequality completes the proof.

\end{proof}

\section{Extension to a second order temporal discretization}
A BDF2 IMEX scheme for NSE with data assimilation is studied in this section.  We prove well-posedness, and global in time stability and convergence.  Similar results as the previous section are found, but here with second order temporal error.  The second order IMEX-FEM algorithm is defined as follows.

\begin{algorithm} \label{bdf2alg1}
	Given any initial conditions $v_h^0,\ v_h^{1} \in V_h$, forcing $f \in L^\infty(0,\infty; L^2(\Omega))$, and true solution $ u \in L^\infty(0,\infty; L^2(\Omega))$, find $(v_h^{n+1}, q_h^{n+1})$ $\in$ $(X_h, Q_h)$ for $n = 1,2,...$, satisfying
	\begin{eqnarray}
	\frac{1}{2\Delta t} \left( 3v_h^{n+1} - 4v_h^n + v_h^{n-1},\chi_h \right) + b(2v_h^{n} - v_h^{n-1},v_h^{n+1},\chi_h) - (q_h^{n+1},\nabla \cdot \chi_h) && \nonumber \\ + \gamma(\nabla \cdot v_h^{n+1}, \nabla \cdot \chi_h)+ \nu (\nabla v_h^{n+1},\nabla \chi_h) + \mu (I_H(v_h^{n+1} - u^{n+1}),\chi_h)&= & (f^{n+1},\chi_h), \label{femda1bdf2} \\
	(\nabla \cdot v_h^{n+1},r_h)  &=& 0 , \label{femda2bdf2}
	\end{eqnarray}
	for all $(\chi_h, r_h) \in X_h \times Q_h$, with $I_H$ a given interpolation operator satisfying \eqref{interp1}-\eqref{interp2}.
\end{algorithm}

We note that again the initial conditions can be chosen arbitrarily in $V_h$. Well-posedness of this algorithm, and long time stability, follow in a similar manner to the backward Euler case.  However, the treatment of the time
derivative terms is much more delicate, and we use G-stability theory to aid in this.  We state and prove the result now.

\begin{lemma} \label{L2stabilityBDF2}
Assume $h$ satisfies $0<h< \frac{\sqrt{\nu}}{C_I\sqrt{2}}$.  Then for any $\mu$ such that
	\[ 
	1\le \mu <  \frac{\nu}{2C_I^{2}h^{2}},  
	\]
	and  for any time step size $\Delta t>0$, Algorithm \ref{bdf2alg1} is well-posed globally in time, and solutions are nonlinearly long-time stable: for any $n>1$,
	\begin{align*}
 \bigg( C_l^2 &\left( \| v_h^{n+1} \|^2 + \| v_h^n \|^2 \right)  + \frac{\alpha\Delta t}{2} \| \nabla v_h^{n+1} \|^2 + \frac{\mu\Delta t}{4}\| v_h^{n+1} \|^2 \bigg) \\
&\le
 \left( C_u^2\| v_h^1 \|^2 + \| v_h^0 \|^2  + \frac{\alpha\Delta t}{2} \| \nabla v_h^{1} \|^2 + \frac{\mu\Delta t}{4}\| v_h^{1} \|^2 \right) \left( \frac{1}{1+\lambda\Delta t} \right)^{n+1}  + C\lambda^{-1} \nu^{-1}F^2 + C\lambda^{-1}\mu U^2.
\end{align*}
where $\lambda=\min \{2\Delta t^{-1}, \frac{\mu C_l^2}{4},\frac{\alpha C_P^{-2}C_l^2}{2} \}$,  $U:=\| u \|_{L^{\infty}(0,\infty;L^{2})}$, and $F:=\| f \|_{L^{\infty}(0,\infty;H^{-1})}$.
\end{lemma}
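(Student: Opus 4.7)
The plan is to parallel the Euler stability proof (Lemma \ref{L2stabilityBE}), but handle the three-level BDF2 time derivative via the G-stability identity \eqref{Gidentity}. First, test \eqref{femda1bdf2}--\eqref{femda2bdf2} with $\chi_h = v_h^{n+1}$: this kills the pressure term (since $v_h^{n+1}$ is discretely divergence-free by \eqref{femda2bdf2}) and the explicit convective term $b(2v_h^n - v_h^{n-1}, v_h^{n+1}, v_h^{n+1}) = 0$ by antisymmetry; the grad-div contribution $\gamma\|\nabla\cdot v_h^{n+1}\|^2$ is non-negative and may be dropped. Applying \eqref{Gidentity} converts the time-derivative term into $\frac{1}{2\Delta t}(\|\chi_v^{n+1}\|_G^2 - \|\chi_v^n\|_G^2)$ plus the non-negative second-difference remainder, which is discarded.

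The nudging and forcing terms are bounded exactly as in the Euler proof. Splitting $\mu(I_H v_h^{n+1}, v_h^{n+1}) = \mu\|v_h^{n+1}\|^2 + \mu(I_H v_h^{n+1} - v_h^{n+1}, v_h^{n+1})$ and applying \eqref{interp1} with Young produces $\mu C_I^2 h^2 \|\nabla v_h^{n+1}\|^2$ to be absorbed into the viscous term (yielding the coefficient $\alpha = \nu - 2\mu C_I^2 h^2$). Similarly, $\mu(I_H u^{n+1}, v_h^{n+1})$ is bounded via \eqref{interp2}, Cauchy--Schwarz, and Young (contributing $C\mu U^2$), while $(f^{n+1}, v_h^{n+1})$ is handled via the dual norm of $X$ and Young (contributing $\nu^{-1} F^2$). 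Multiplying by $2\Delta t$ gives
\begin{align*}
\|\chi_v^{n+1}\|_G^2 + \alpha\Delta t\,\|\nabla v_h^{n+1}\|^2 + \mu\Delta t\,\|v_h^{n+1}\|^2 \le \|\chi_v^n\|_G^2 + C\Delta t(\nu^{-1}F^2 + \mu U^2).
\end{align*}

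To extract geometric decay, introduce the composite energy $A^{n+1} := \|\chi_v^{n+1}\|_G^2 + \frac{\alpha\Delta t}{2}\|\nabla v_h^{n+1}\|^2 + \frac{\mu\Delta t}{4}\|v_h^{n+1}\|^2$, which dominates the LHS of the lemma via the equivalence $\|\chi_v^{n+1}\|_G^2 \ge C_l^2(\|v_h^n\|^2+\|v_h^{n+1}\|^2)$. Substituting $\|\chi_v^n\|_G^2 = A^n - \frac{\alpha\Delta t}{2}\|\nabla v_h^n\|^2 - \frac{\mu\Delta t}{4}\|v_h^n\|^2$ into the above and rearranging exposes usable dissipation at both time levels $n$ and $n+1$, namely $\frac{\alpha\Delta t}{2}(\|\nabla v_h^{n+1}\|^2 + \|\nabla v_h^n\|^2) + \frac{3\mu\Delta t}{4}\|v_h^{n+1}\|^2 + \frac{\mu\Delta t}{4}\|v_h^n\|^2$. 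The goal is to show $(1+\lambda\Delta t)A^{n+1} \le A^n + C\Delta t(\nu^{-1}F^2 + \mu U^2)$, so that Lemma \ref{geoseries} delivers the claimed geometric decay of $A^{n+1}$. This reduces to bounding $\lambda A^{n+1}$ piecewise: for $\lambda\|\chi_v^{n+1}\|_G^2$, use the equivalence $\|\chi_v^{n+1}\|_G^2 \le C_u^2(\|v_h^n\|^2+\|v_h^{n+1}\|^2)$ and absorb the $\|v_h^n\|^2$ part using the Poincar\'e-reduced dissipation $\frac{\alpha C_P^{-2}\Delta t}{2}\|v_h^n\|^2$ (requiring $\lambda \le \alpha C_P^{-2} C_l^2/2$) and the $\|v_h^{n+1}\|^2$ part using a portion of $\frac{3\mu\Delta t}{4}\|v_h^{n+1}\|^2$ (requiring $\lambda \le \mu C_l^2/4$); the remaining $\lambda\frac{\alpha\Delta t}{2}\|\nabla v_h^{n+1}\|^2$ and $\lambda\frac{\mu\Delta t}{4}\|v_h^{n+1}\|^2$ pieces inside $A^{n+1}$ are absorbed by reserving portions of the matching dissipation at level $n+1$, producing the time-step constraint $\lambda \le 2/\Delta t$.

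Finally, translate the G-based bound to the $L^2$-based statement. The LHS of the lemma follows from the equivalence $\|\chi_v^{n+1}\|_G^2 \ge C_l^2(\|v_h^n\|^2 + \|v_h^{n+1}\|^2)$ (a consequence of $\|x\| \le C_u\|x\|_G$ and $C_u C_l = 1$), and the initial energy $A^1$ is controlled from above by directly expanding $\|\chi_v^1\|_G^2 = \frac{1}{2}\|v_h^0\|^2 - 2(v_h^0, v_h^1) + \frac{5}{2}\|v_h^1\|^2$ and applying Young's inequality (even the crude bound $\frac{9}{2}\|v_h^1\|^2 + \|v_h^0\|^2$ fits inside $C_u^2\|v_h^1\|^2 + \|v_h^0\|^2$). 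Well-posedness and uniqueness at each time step follow from linearity and finite-dimensionality: the difference of any two solutions satisfies the same inequality with $F=U=0$, forcing uniqueness and hence existence at every step. The main obstacle is precisely the bookkeeping in the third paragraph: the G-norm at level $n+1$ mixes the two velocity iterates, so the raw dissipation (only at level $n+1$) must be augmented by the ``hidden'' dissipation at level $n$ exposed through the $A^n$ expansion, and then carefully apportioned among controlling $\|\chi_v^{n+1}\|_G^2$ via the G-equivalence and the two reservation pieces inside $A^{n+1}$.
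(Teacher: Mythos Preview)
Your proposal is correct and follows essentially the same approach as the paper's proof: test with $v_h^{n+1}$, apply the $G$-stability identity \eqref{Gidentity}, bound the nudging and forcing terms exactly as in Lemma~\ref{L2stabilityBE}, then build the composite energy $A^{n+1}=\|\chi_v^{n+1}\|_G^2+\tfrac{\alpha\Delta t}{2}\|\nabla v_h^{n+1}\|^2+\tfrac{\mu\Delta t}{4}\|v_h^{n+1}\|^2$ and verify $(1+\lambda\Delta t)A^{n+1}\le A^n+C\Delta t(\nu^{-1}F^2+\mu U^2)$ before invoking Lemma~\ref{geoseries}. The only cosmetic difference is that the paper adds $\tfrac{\alpha\Delta t}{2}\|\nabla v_h^n\|^2+\tfrac{\mu\Delta t}{4}\|v_h^n\|^2$ to both sides (rather than substituting $\|\chi_v^n\|_G^2=A^n-\cdots$ as you do) and then lower-bounds the symmetric dissipation $\tfrac{\mu\Delta t}{4}(\|v_h^{n+1}\|^2+\|v_h^n\|^2)$ and $\tfrac{\alpha\Delta t}{2}(\|\nabla v_h^{n+1}\|^2+\|\nabla v_h^n\|^2)$ directly by multiples of $\|\chi_v^{n+1}\|_G^2$ via $C_l\|x\|_G\le\|x\|$, whereas you upper-bound $\|\chi_v^{n+1}\|_G^2$ by $C_u^2(\|v_h^n\|^2+\|v_h^{n+1}\|^2)$; since $C_uC_l=1$ these are exactly dual manipulations yielding the same constraints on $\lambda$.
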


\begin{proof}
Choose $\chi_h=v_h^{n+1}$ in \eqref{femda1bdf2} and use \eqref{Gidentity} to obtain the bound
\begin{multline*}
\frac{1}{2\Delta t}\left( \| [v_h^{n+1};v_h^n] \|_G^2 \right)
+ \nu \| \nabla v_h^{n+1} \|^2
+ \mu(I_H(v_h^{n+1}),v_h^{n+1})
\\ \le
\frac{1}{2\Delta t}\left( \| [v_h^{n};v_h^{n-1}] \|_G^2 \right)
+ | \left(f^{n+1},v_h^{n+1} \right) |
+ \mu|(I_H(u^{n+1}),v_h^{n+1})|.
\end{multline*}
noting that we dropped the non-negative terms $\gamma \| \nabla \cdot v_h^{n+1} \|^2$ and $\frac{1}{4\Delta t} \| v_h^{n+1} - 2v_h^n + v_h^{n-1} \|^2$ from the left hand side, and that the nonlinear term and pressure term drop.  Analyzing the nudging, viscous and forcing terms just as in the backward Euler case, and multiplying both sides by $2\Delta t$ we reduce the bound to 
\begin{eqnarray*}
\| [v_h^{n+1};v_h^n] \|_G^2 
+ \alpha\Delta t \| \nabla v_h^{n+1} \|^2
+ \mu\Delta t \| v_h^{n+1} \|^2
\le
\| [v_h^{n};v_h^{n-1}] \|_G^2 
+ \Delta t( 2\nu^{-1} F^2 
+C\mu U^2).
\end{eqnarray*}
Next, drop the viscous term on the left hand side, and add $ \frac{\mu\Delta t}{4}\| v_h^{n} \|^2 + \frac{\alpha\Delta t}{2} \| \nabla v_h^{n} \|^2$ to both sides.  This gives
\begin{eqnarray*}
&& \hspace{-.5in} \left( \| [v_h^{n+1};v_h^n] \|_G^2  + \frac{\mu\Delta t}{4}\| v_h^{n+1} \|^2 +   \frac{\alpha\Delta t}{2} \| \nabla v_h^{n+1} \|^2  \right) \\
&& + \frac{\mu\Delta t}{4}  \left( \|v_h^{n+1} \|^2 + \| v_h^n \|^2 \right)  
+ \frac{\alpha\Delta t}{2} \left( \| \nabla v_h^{n+1} \|^2 + \| \nabla v_h^n \|^2 \right)
+ \frac{\mu\Delta t}{2} \| v_h^{n+1} \|^2
+ \alpha\Delta t \|  \nabla v_h^{n+1} \|^2 
\\
& & \le 
\left( \| [v_h^{n};v_h^{n-1}] \|_G^2  + \frac{\mu\Delta t}{4}\| v_h^{n} \|^2 +   \frac{\alpha\Delta t}{2} \| \nabla v_h^{n} \|^2 \right)
+ \Delta t( 2\nu^{-1} F^2+ C\mu U^2),
\end{eqnarray*}
which reduces using Poincar\'e's inequality and G-norm equivalence to
\begin{eqnarray*}
&& \hspace{-.5in} \left( \| [v_h^{n+1};v_h^n] \|_G^2  + \frac{\mu\Delta t}{4}\| v_h^{n+1} \|^2 +   \frac{\alpha\Delta t}{2} \| \nabla v_h^{n+1} \|^2  \right) \\
&& +  \frac{\mu\Delta tC_l^2 }{4}   \| [v_h^{n+1};v_h^n] \|_G^2 
+ \frac{\alpha\Delta t C_P^{-2} C_l^2}{2}  \| [v_h^{n+1};v_h^n] \|_G^2 
+ \frac{\mu\Delta t}{2} \| v_h^{n+1} \|^2
+ \alpha\Delta t \|  \nabla v_h^{n+1} \|^2 
\\
& &  \le 
\left( \| [v_h^{n};v_h^{n-1}] \|_G^2  + \frac{\mu\Delta t}{4}\| v_h^{n} \|^2 +   \frac{\alpha\Delta t}{2} \| \nabla v_h^{n} \|^2 \right)
+ \Delta t( 2\nu^{-1} F^2+ C\mu U^2),
\end{eqnarray*}
Thus there exists $\lambda=\min \{2\Delta t^{-1}, \frac{\mu C_l^2}{4},\frac{\alpha C_P^{-2}C_l^2}{2} \}$ such that
\begin{eqnarray*}
&& \hspace{-.5in} (1+\lambda\Delta t) \left( \| [v_h^{n+1};v_h^n] \|_G^2  + \frac{\alpha\Delta t}{2} \| \nabla v_h^{n+1} \|^2 + \frac{\mu\Delta t}{4}\| v_h^{n+1} \|^2 \right) \\
& & \le 
\left( \| [v_h^{n};v_h^{n-1}] \|_G^2 + \frac{\alpha\Delta t}{2} \| \nabla v_h^n \|^2 + \frac{\mu\Delta t}{4}\| v_h^{n} \|^2 \right)
+ \Delta t( 2\nu^{-1} F^2 + C\mu U^2),
\end{eqnarray*}
and so by Lemma \ref{geoseries},
\begin{align*}
 \bigg( \| [v_h^{n+1};v_h^n] \|_G^2  &+ \frac{\alpha\Delta t}{2} \| \nabla v_h^{n+1} \|^2 + \frac{\mu\Delta t}{4}\| v_h^{n+1} \|^2 \bigg) \\
&\le
 \left( \| [v_h^{1};v_h^0] \|_G^2  + \frac{\alpha\Delta t}{2} \| \nabla v_h^{1} \|^2 + \frac{\mu\Delta t}{4}\| v_h^{1} \|^2 \right) \left( \frac{1}{1+\lambda\Delta t} \right)^{n+1} \\
 & \  \ \ + C\lambda^{-1}(\nu^{-1}F^2 + \mu U^2).
\end{align*}
Applying the G-norm equivalence completes the proof of stability.

Since the scheme is linear and finite dimensional at each time step, this uniform in $n$ stability result gives existence and uniqueness of the algorithm at every time step.
\end{proof}

Proving a long time accuracy result for Algorithm \ref{bdf2alg1} follows in a similar manner to the first order result in the previous section.  The key difference is the time derivative terms, which we handle with the G-stability theory in a manner similar to the stability proof.

\begin{theorem} \label{bdf2conv2}
	Let $u,p$ solve the NSE \eqref{nse1}-\eqref{nse2} with given $f\in L^{\infty}(0,\infty;L^2(\Omega))$ and $u_0\in L^2(\Omega)$, with $u \in L^\infty (0, \infty; H^{k+1}(\Omega))$, $p \in L^\infty (0, \infty; H^{k}(\Omega))$ ($k\ge 1$), $u_{tt} \in L^{\infty}(0,\infty;L^2(\Omega))$, and $u_{ttt} \in L^\infty(0, \infty; H^1(\Omega))$.  Denote $U:= | u |_{L^{\infty}(0,\infty;H^{k+1})}$ and $P:= | p |_{L^{\infty}(0,\infty;H^{k})}$.  Assume the time step size satisfies
\[
\Delta t <  CM^2\nu^{-1} \left( h^{2k-3} U^2  +   \|\nabla u^{n+1}\|^2_{L^3} + \|u^{n+1}\|_{L^\infty}^2 \right)^{-1},
\]
and the parameter $\mu$ satisfies
\[
 CM^2\nu^{-1} \bigg(  h^{2k-3} U^2 +  \|\nabla u^{n+1}\|_{L^3}^2 + \| u^{n+1}\|_{L^{\infty}}^2  \bigg) < \mu < \frac{2\nu}{C_I^2 h^2}.
\]
Then there exists a $\lambda>0$ (independent of $h$ and $\Delta t$) such that the error in solutions to Algorithm \ref{bdf2alg1} satisfies, for any $n$,	
\[
\| u^n - v_h^n \|^2 \le \left( \frac{1}{1+\lambda\Delta t} \right)^{n} \| u_0 - v_h^0 \|^2 + \frac{R}{\lambda},
\] 
where 
	$
	R =C\nu^{-1}(1+M^2) \dt^4 
		+ C h^{2k} \bigg(  \gamma^{-1}  P^2
		+  (\nu + \gamma + M^2 \nu^{-1} + M^2 \nu^{-1}h^{2k}U^2 + \nu C_I^{-2} ) U^2 \bigg)$.	
\end{theorem}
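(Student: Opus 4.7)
The plan is to mirror the proof of Theorem \ref{conv2}, but replace the backward Euler time derivative analysis with the G-stability machinery from Lemma \ref{L2stabilityBDF2}, and account for the fact that the convecting velocity is now the extrapolation $2v_h^n - v_h^{n-1}$. Using Taylor expansion, the true solution satisfies an analogue of \eqref{femda1bdf2} up to consistency errors of size $\Delta t^2$ in the time derivative (coming from $\frac{3u^{n+1}-4u^n+u^{n-1}}{2\Delta t} - u_t^{n+1}$, bounded by $\Delta t^2 u_{ttt}$) and of size $\Delta t^2$ in the extrapolated nonlinearity (coming from $b(2u^n - u^{n-1} - u^{n+1}, u^{n+1}, \chi_h)$, bounded via $\Delta t^2 u_{tt}$). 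Subtracting from \eqref{femda1bdf2} yields the error equation for $e^n = u^n - v_h^n$, which I then split as $e^n = \eta^n + \phi_h^n$ with $\eta^n = u^n - P_{V_h}^{L^2}(u^n)$ and $\phi_h^n \in V_h$, and test with $\chi_h = \phi_h^{n+1}$.

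The left-hand side then produces, by identity \eqref{Gidentity}, the telescoping quantity
\[
\tfrac{1}{2\Delta t}\bigl(\|[\phi_h^{n+1};\phi_h^n]\|_G^2 - \|[\phi_h^n;\phi_h^{n-1}]\|_G^2\bigr) + \tfrac{1}{4\Delta t}\|\phi_h^{n+1}-2\phi_h^n+\phi_h^{n-1}\|^2,
\]
plus the dissipative $\nu\|\nabla\phi_h^{n+1}\|^2$, the grad-div term, and the nudging term $\mu(I_H\phi_h^{n+1},\phi_h^{n+1})$. The right-hand side collects: two consistency terms of order $\Delta t^2$, the interpolation/projection terms involving $\eta^n$ (handled exactly as in Theorem \ref{conv2}), the pressure term (bounded by $\gamma^{-1}\|p^{n+1}-r_h\|^2$), the linearization terms $b(\cdot,\eta^{n+1},\phi_h^{n+1})$ and $b(\eta^n,u^{n+1},\phi_h^{n+1})$, and the ``convective'' difference $b(2v_h^n - v_h^{n-1} - u^{n+1},u^{n+1},\phi_h^{n+1})$ which must be manipulated carefully. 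For the latter, I would write $2v_h^n - v_h^{n-1} - u^{n+1} = (2u^n-u^{n-1}-u^{n+1}) - (2\eta^n-\eta^{n-1}) - (2\phi_h^n - \phi_h^{n-1})$, send the first piece to consistency ($\Delta t^2$), the second to the interpolation contribution, and on the third use the second inequality of Lemma \ref{bbounds} after further splitting $2\phi_h^n - \phi_h^{n-1} = \phi_h^{n+1} + (2\phi_h^n - \phi_h^{n-1} - \phi_h^{n+1})$; the second factor is exactly the numerator of the BDF2 jump term and is absorbed by the $\|\phi_h^{n+1}-2\phi_h^n+\phi_h^{n-1}\|^2$ on the left, provided $\Delta t$ is small enough. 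The first factor yields a $\|\phi_h^{n+1}\|^2(\|\nabla u^{n+1}\|_{L^3}^2+\|u^{n+1}\|_{L^\infty}^2)$ contribution absorbed by the $\mu$ nudging term, as in the first-order case.

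I still need to handle the $b(2v_h^n-v_h^{n-1},\eta^{n+1},\phi_h^{n+1})$ term using the first bound of Lemma \ref{bbounds}, writing $\|\nabla(2v_h^n-v_h^{n-1})\| \le 2\|\nabla u^n\|+\|\nabla u^{n-1}\|+2\|\nabla\eta^n\|+\|\nabla\eta^{n-1}\|+2\|\nabla\phi_h^n\|+\|\nabla\phi_h^{n-1}\|$, and then using the inverse inequality on the discrete pieces to move everything down to $L^2$ at the cost of a factor $h^{-1}$. The extra $h^{-1}$ (versus $h^{-2}$ in the backward Euler case produced only a factor $h^{2k-2}$) is what forces the $h^{2k-3}$ appearing in the time-step and $\mu$ restrictions. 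As in the Euler case, the three pieces $\|\phi_h^{n+1}\|^2$, $\|\phi_h^n\|^2$, $\|\phi_h^{n-1}\|^2$ will appear multiplied by $M^2\nu^{-1}h^{2k-3}U^2$; the first is absorbed by $\mu\|\phi_h^{n+1}\|^2$, while the second and third must be incorporated into the G-norm energy (by adding $\tfrac{\mu\Delta t}{4}\|\phi_h^n\|^2$ and a similar quantity at level $n-1$ to both sides, exactly as in Lemma \ref{L2stabilityBDF2}) so that the recurrence closes.

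The main obstacle, as in the stability proof, is organizing the G-norm bookkeeping so that the recursion reads $(1+\lambda\Delta t)E^{n+1} \le E^n + \Delta t R$ with $E^n = \|[\phi_h^n;\phi_h^{n-1}]\|_G^2 + \tfrac{\alpha\Delta t}{2}\|\nabla\phi_h^n\|^2 + \tfrac{\mu\Delta t}{4}\|\phi_h^n\|^2$; the relevant $\lambda$ is $\min\{2\Delta t^{-1},\mu C_l^2/4,\alpha C_P^{-2}C_l^2/2\}$, inherited from Lemma \ref{L2stabilityBDF2}, and $R$ gathers the consistency ($\Delta t^4$) and interpolation ($h^{2k}$) contributions as stated. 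Once this recurrence is in hand, Lemma \ref{geoseries} gives the geometric-in-$n$ decay toward $R/\lambda$, the G-norm equivalence $C_l\|x\|_G\le\|x\|$ converts the energy into $\|\phi_h^n\|^2$, and the triangle inequality with $\|\eta^n\|\lesssim h^{k+1}U$ finishes the proof.
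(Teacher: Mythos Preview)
Your outline matches the paper's proof in all the essential moves: Taylor-expand to get $O(\Delta t^2)$ consistency errors, subtract, split $e^n=\eta^n+\phi_h^n$ with the $L^2$ projection, test with $\phi_h^{n+1}$, invoke the $G$-identity \eqref{Gidentity}, and close a recurrence of the form $(1+\lambda\Delta t)E^{n+1}\le E^n+\Delta t R$ via Lemma~\ref{geoseries}. The treatment of $b(2\phi_h^n-\phi_h^{n-1},u^{n+1},\phi_h^{n+1})$ via the splitting $2\phi_h^n-\phi_h^{n-1}=\phi_h^{n+1}-(\phi_h^{n+1}-2\phi_h^n+\phi_h^{n-1})$ is exactly what the paper does.

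There is one substantive difference, and your account of where $h^{2k-3}$ comes from is not correct. For $b(2v_h^n-v_h^{n-1},\eta^{n+1},\phi_h^{n+1})$ the paper does \emph{not} use the first bound in Lemma~\ref{bbounds} with an inverse inequality on $\|\nabla\phi_h^n\|,\|\nabla\phi_h^{n-1}\|$. Instead it first writes $2v_h^n-v_h^{n-1}=(2u^n-u^{n-1})-(2\eta^n-\eta^{n-1})-(2\phi_h^n-\phi_h^{n-1})$, and on the $\phi$-piece it applies the \emph{same} add/subtract-$\phi_h^{n+1}$ trick again, followed by the \emph{second} bound of Lemma~\ref{bbounds} with $\eta^{n+1}$ in the middle slot. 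This produces factors $\|\eta^{n+1}\|_{L^\infty}^2+\|\nabla\eta^{n+1}\|_{L^3}^2$, and it is the interpolation/inverse estimate on \emph{those} norms that yields the $h^{2k-3}U^2$ appearing in the $\mu$ and $\Delta t$ restrictions. The payoff is that only $\|\phi_h^{n+1}\|^2$ and $\|\phi_h^{n+1}-2\phi_h^n+\phi_h^{n-1}\|^2$ ever appear on the right, so no separate $\|\phi_h^n\|^2$ or $\|\phi_h^{n-1}\|^2$ terms need to be absorbed and the $G$-norm recurrence closes directly, without the extra bookkeeping you describe.

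Your route (first bound plus inverse inequality) would in fact give a coefficient $h^{2k-2}$, not $h^{2k-3}$, and would leave $\|\phi_h^n\|^2+\|\phi_h^{n-1}\|^2$ on the right. These can be controlled, but not by ``adding $\tfrac{\mu\Delta t}{4}\|\phi_h^n\|^2$ to both sides as in Lemma~\ref{L2stabilityBDF2}''---that step builds the augmented energy, it does not absorb error terms. What you would actually need is to bound $\|\phi_h^n\|^2+\|\phi_h^{n-1}\|^2\le C_l^{-2}\|[\phi_h^n;\phi_h^{n-1}]\|_G^2\le C_l^{-2}E^n$ and then require $\mu$ large enough that the resulting $(1+\epsilon\Delta t)E^n$ on the right is still beaten by the $(1+\lambda\Delta t)E^{n+1}$ on the left. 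That works, but the paper's route is cleaner and is the actual source of the stated $h^{2k-3}$.
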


\begin{remark}
For the case of Taylor-Hood $(P_2, P_1) $ or Scott-Vogelius $(P_2, P_1^{disc}) $ elements and $0$ initial condition in the DA algorithm, the result of the theorem reduces to
\[
\| u^n - v_h^n \| \le C \left(
\left( \frac{1}{1+\lambda\Delta t} \right)^{n/2} \| u_0 \|
+ 
\Delta t^2 + h^2
\right),
\]
{\color{black} where $C$ depends on problem data and the true solution, not $\dt$ or $h$. }
\end{remark}
\begin{remark}
Similar to the first order case, the time step restriction is a consequence of the IMEX time stepping.  If we instead consider the fully nonlinear scheme, then no $\Delta t$ restriction is required for a similar result to hold.  However, there is seemingly a time step restriction necessary for solution uniqueness for the nonlinear scheme.
\end{remark}
\begin{remark}
Just as in the first order case, grad-div stabilization reduces the effect of the pressure on the $L^2(\Omega)$ DA solution error.  With grad-div, the contribution of the error term is $h^{k} \gamma^{-1/2} 	 | p |_{L^{\infty}(0,\infty;H^{k})} $, but without it, the $\gamma^{-1/2}$ would be replaced by a $\nu^{-1/2}$.  If divergence-free elements were used, then this term would completely vanish.  We show in numerical experiment 2 below case where a DA simulation will fail with Taylor-Hood elements with $\gamma=0,1,10$, but will work very well with Scott-Vogelius elements.
\end{remark}

\begin{proof}
Throughout this proof, the constant $C$ will denote a generic constant, possibly changing from line to line, that is independent of $h$, $\mu$, and $\dt$.

	Using Taylor's theorem, the NSE (true) solution satisfies, for all  $\chi_h\in X_h$,
		\begin{multline}
	\frac{1}{2\Delta t} \left(3u^{n+1} - 4u^n + u^{n-1},\chi_h \right) + b(2u^{n}-u^{n-1},u^{n+1},\chi_h) - (p^{n+1},\nabla \cdot \chi_h) + \gamma (\nabla \cdot u^{n+1}, \nabla \cdot \chi_h)  \\ + \nu (\nabla u^{n+1},\nabla \chi_h)=  (f^{n+1},\chi_h)
	+ \frac{\dt^2}{3} (u_{ttt}(t^*),\chi_h) + \dt^2 b(u_{tt}(t^{**}) ,u^{n+1},\chi_h), \label{nsetrue3_new}
		\end{multline}
	where $t^*, t^{**} \in [t^{n-1}, t^{n+1}]$.  Subtracting \eqref{femda1bdf2} from \eqref{nsetrue3} yields the following difference equation, with $e^n:=u^n - v_h^n$:
		\begin{align*}
	\frac{1}{2\dt} & (3e^{n+1} - 4e^n + e^{n-1}, \chi_h ) + \nu (\nabla e^{n+1}, \nabla \chi_h) + \mu (I_H(e^{n+1}), \chi_h ) + \gamma (\nabla \cdot e^{n+1}, \nabla \cdot \chi_h)
	\\ & =  \frac{\dt^2}{3} (u_{ttt}(t^*), \chi_h) + \dt^2 (u_{tt}(t^{**}) \cdot \nabla u^{n+1}, \chi_h) - (p^{n+1} , \nabla \cdot \chi_h) + b(2v_h^n - v_h^{n-1}, e^{n+1}, \chi_h) 
	\\ & \,\,\,\, + b(2e^n - e^{n-1}, u^{n+1}, \chi_h) .
	\end{align*}
	We decompose the error into a piece inside the discrete space $V_h$ and one outside of it by adding and subtracting $P_{V_h}^{L^2}(u^n)$. Denote $\eta^{n} := u^{n} - P_{V_h}^{L^2}(u^n)$ and $\phi_h^{n} := P_{V_h}^{L^2}(u^n) - v_h^{n}$. Then $e^n = \eta^n + \phi_h^n$ with $\phi_h^n \in V_h$, and we choose $\chi_h=\phi_h^{n+1}$. Using identity \eqref{Gidentity} with $\psi_\phi := (\phi_h^n , \phi_h^{n+1})^T$, the difference equation becomes
	\begin{align}
		\frac{1}{2\dt} & [\|\psi_\phi^{n+1}\|_G^2 - \|\psi_\phi^{n}\|_G^2] + \frac{1}{4\dt} \|\phi_h^{n+1} - 2\phi_h^n + \phi_h^{n-1}\|^2 + \nu \|\nabla \phi_h^{n+1}\|^2 + \mu \|\phi_H^{n+1}\|^2 + \gamma \|\nabla \cdot \phi_h^{n+1}\|^2 \nonumber
		\\ & = \frac{\dt^2}{3} (u_{ttt}(t^*), \phi_h^{n+1}) 
		+ \dt^2 (u_{tt}(t^{**}) \cdot \nabla u^{n+1}, \phi_h^{n+1}) 
		- (p^{n+1} , \nabla \cdot \phi_h^{n+1}) 
		  \nonumber
		\\ & \,\,\,\,  
		+ b(2\phi_h^n - \phi_h^{n-1}, u^{n+1}, \phi_h^{n+1})  \nonumber
		+  b(2\eta^n - \eta^{n-1}, u^{n+1}, \phi_h^{n+1}) 
		+ b(2v_h^n - v_h^{n-1}, \eta^{n+1}, \phi_h^{n+1})
		\\ & \,\,\,\, - \nu (\nabla \eta^{n+1}, \nabla \phi_h^{n+1}) 
		- \mu (I_H\phi_h^{n+1} - \phi_h^{n+1}, \phi_h^{n+1}) 
		- \mu (I_H\eta^{n+1}, \phi_h^{n+1})   \nonumber
		\\ & \,\,\,\, - \gamma (\nabla \cdot \eta^{n+1}, \nabla \cdot \phi_h^{n+1}),\label{bdf2diff}
	\end{align}
	where we have added and subtracted $\phi_h^{n+1}$ in the interpolation term on the left hand side. We can now bound the right hand side of \eqref{bdf2diff}. 
	For the first nonlinear term in \eqref{bdf2diff}, we add and subtract $\phi_h^{n+1}$ in the first argument to get 
	\begin{align}
	b(2\phi_h^n - \phi_h^{n-1}, u^{n+1}, \phi_h^{n+1}) = b(\phi_h^{n+1}, u^{n+1}, \phi_h^{n+1}) - b(\phi_h^{n+1} - 2\phi_h^n + \phi_h^{n-1}, u^{n+1}, \phi_h^{n+1}). \label{nl3}
	\end{align}	
	We bound the two resulting terms using Lemma \ref{bbounds} and Young's inequality, via
	\begin{align*}
	b(\phi_h^{n+1}, u^{n+1}, \phi_h^{n+1}) &\leq CM \nu^{-1} (\|\nabla u^{n+1}\|_{L^3}^2 + \|u^{n+1}\|^2_{L^\infty}) \| \phi_h^{n+1}\|^2 + \frac{\nu}{16}\| \nabla\phi_h^{n+1}\|^2, 
	\end{align*}
	and
	\begin{align*}
	b(\phi_h^{n+1} - 2\phi_h^n + & \phi_h^{n-1}, u^{n+1}, \phi_h^{n+1})  \\
	&\leq CM \nu^{-1} (\|\nabla u^{n+1}\|_{L^3}^2 + \|u^{n+1}\|^2_{L^\infty}) \|\phi_h^{n+1} - 2\phi_h^n + \phi_h^{n-1}\|^2 + \frac{\nu}{16}\| \nabla\phi_h^{n+1}\|^2. 
	\end{align*}
	The second nonlinear term in \eqref{bdf2diff} is bounded with this same technique:
	\begin{align*}
	b(2\eta^n - \eta^{n-1},& u^{n+1}, \phi_h^{n+1})  \\
	&\leq CM^2 \nu^{-1} (\|\nabla u^{n+1}\|_{L^3}^2 + \|u^{n+1}\|^2_{L^\infty}) \|2\eta^n - \eta^{n-1}\|^2 + \frac{\nu}{16}\| \nabla\phi_h^{n+1}\|^2. 
	\end{align*}
	The last nonlinear term in \eqref{bdf2diff} requires a bit more work, and we start by adding and subtracting $2u^n - u^{n-1}$ in the first component, 
	which yields
	\begin{align}
	b(2v_h^n - v_h^{n-1}, \eta^{n+1}, \phi_h^{n+1}) &= b(2u^n - u^{n-1}, \nonumber \eta^{n+1}, \phi_h^{n+1}) + b(2e^n - e^{n-1}, \eta^{n+1}, \phi_h^{n+1})
	\\ &= b(2u^n - u^{n-1}, \eta^{n+1}, \phi_h^{n+1}) + b(2\phi_h^n - \phi_h^{n-1}, \eta^{n+1}, \phi_h^{n+1}) \nonumber
	\\ & \,\,\,\, +  b(2\eta^n - \eta^{n-1}, \eta^{n+1}, \phi_h^{n+1}). \label{nl4}
	\end{align}
	The first and third terms on the right hand side of \eqref{nl4} are bounded in the same way, using Lemma \ref{bbounds} and Young's inequality, to get that
	\begin{align*}
	b(2u^n - u^{n-1}, \eta^{n+1}, \phi_h^{n+1}) 
	& \le
	C\nu^{-1} M^2 \| \nabla ( 2u^n - u^{n-1} ) \|^2 \| \nabla \eta^{n+1} \|^2 + \frac{\nu}{16} \| \nabla \phi_h^{n+1} \|^2,
	\end{align*}
	\begin{align*}
	b(2\eta^n - \eta^{n-1}, \eta^{n+1}, \phi_h^{n+1}) 
	& \le
	C\nu^{-1} M^2 \| \nabla ( 2\eta^n - \eta^{n-1} ) \|^2 \| \nabla \eta^{n+1} \|^2 + \frac{\nu}{16} \| \nabla \phi_h^{n+1} \|^2,
	\end{align*}
	For the second term in \eqref{nl4} we first add $\phi_h^{n+1}$ to the first argument to get
	\begin{align*}
	b(2\phi_h^n - \phi_h^{n-1}, \eta^{n+1}, \phi_h^{n+1}) 
	=
	b(\phi_h^{n+1} - 2\phi_h^n + \phi_h^{n-1}, \eta^{n+1}, \phi_h^{n+1}) 
	+
	b(\phi_h^{n+1}, \eta^{n+1}, \phi_h^{n+1}),
	\end{align*}
	and then bound each resulting term using Lemma \ref{bbounds} and Young's inequality:
	\begin{align*}
	b(\phi_h^{n+1}, \eta^{n+1}, \phi_h^{n+1})
	&\le
	CM^2\nu^{-1} (\| \eta^{n+1} \|_{L^{\infty}}^2 + \| \nabla \eta^{n+1} \|_{L^3}^2 ) \| \phi_h^{n+1} \|^2 + \frac{\nu}{16} \| \nabla \phi_h^{n+1} \|^2,
	\end{align*}
	\begin{align*}
	b(\phi_h^{n+1} - 2\phi_h^n + \phi_h^{n-1}, & \eta^{n+1}, \phi_h^{n+1}) \\
	&\le
	CM^2\nu^{-1} (\| \eta^{n+1} \|_{L^{\infty}}^2 + \| \nabla \eta^{n+1} \|_{L^3}^2 ) \|\phi_h^{n+1} - 2\phi_h^n + \phi_h^{n-1} \|^2 + \frac{\nu}{16} \| \nabla \phi_h^{n+1} \|^2.
	\end{align*}
	
	Collecting the above bounds, we can reduce \eqref{bdf2diff} to
		\begin{align}
		\frac{1}{2\dt} & [\|\psi_\phi^{n+1}\|_G^2 - \|\psi_\phi^{n}\|_G^2] 
		+ \frac{9\nu}{16} \|\nabla \phi_h^{n+1}\|^2 
		+ \gamma \|\nabla \cdot \phi_h^{n+1}\|^2 \nonumber \\
		+ \bigg( \frac{1}{4\dt} & -CM^2\nu^{-1} (\| \eta^{n+1} \|_{L^{\infty}}^2 + \| \nabla \eta^{n+1} \|_{L^3}^2 + \| u^{n+1} \|_{L^{\infty}}^2 + \| \nabla u^{n+1} \|^2 ) \bigg) \|\phi_h^{n+1} - 2\phi_h^n + \phi_h^{n-1}\|^2  \nonumber \\
		+ \bigg( \mu & - CM^2\nu^{-1} (\| \eta^{n+1} \|_{L^{\infty}}^2 + \| \nabla \eta^{n+1} \|_{L^3}^2 + \| u^{n+1} \|_{L^{\infty}}^2 + \| \nabla u^{n+1} \|^2) \bigg) \|\phi_h^{n+1}\|^2 \nonumber \\
		& \le C\dt^2 \| u_{ttt} \|_{L^{\infty}(t^{n-1},t^{n+1},L^2)}  \|  \phi_h^{n+1}\| 
		+ \dt^2 | (u_{tt}(t^{**}) \cdot \nabla u^{n+1}, \phi_h^{n+1}) |
		+ | (p^{n+1}-r_h , \nabla \cdot \phi_h^{n+1}) |
		  \nonumber
		\\ & + \nu |(\nabla \eta^{n+1}, \nabla \phi_h^{n+1}) |
		+ \mu |(I_H\phi_h^{n+1} - \phi_h^{n+1}, \phi_h^{n+1})| 
		+ \mu |(I_H \eta^{n+1}, \phi_h^{n+1}) | 
		 \nonumber \\
		& +C\nu^{-1} M^2 \| \nabla ( 2u^n - u^{n-1} ) \|^2 \| \nabla \eta^{n+1} \|^2
		+C\nu^{-1} M^2 \| \nabla ( 2\eta^n - \eta^{n-1} ) \|^2 \| \nabla \eta^{n+1} \|^2 \nonumber \\
		& + CM^2 \nu^{-1} (\|\nabla u^{n+1}\|_{L^3}^2 + \|u^{n+1}\|^2_{L^\infty}) \|2\eta^n - \eta^{n-1}\|^2
		+ \gamma |(\nabla \cdot \eta^{n+1}, \nabla \cdot \phi_h^{n+1})|,
		\label{bdf2diff1}
	\end{align}
	where $r_h \in Q_h$ is chosen arbitrarily, see e.g. \cite{BS08}.
Now using interpolation estimates (and implicitly also the inverse inequality) along with regularity assumptions, we obtain
		\begin{align}
		\frac{1}{2\dt} & [\|\psi_\phi^{n+1}\|_G^2 - \|\psi_\phi^{n}\|_G^2] 
		+ \frac{9\nu}{16} \|\nabla \phi_h^{n+1}\|^2 
		+ \gamma \|\nabla \cdot \phi_h^{n+1}\|^2 \nonumber \\
		+ \bigg( \frac{1}{4\dt} & -CM^2\nu^{-1} ( h^{2k-3} U^2 + \| u^{n+1} \|_{L^{\infty}}^2 + \| \nabla u^{n+1} \|_{L^3}^2 ) \bigg) \|\phi_h^{n+1} - 2\phi_h^n + \phi_h^{n-1}\|^2  \nonumber \\
		+ \bigg( \mu & - CM^2\nu^{-1} (h^{2k-3}U^2 + \| u^{n+1} \|_{L^{\infty}}^2 + \| \nabla u^{n+1} \|_{L^3}^2) \bigg) \|\phi_h^{n+1}\|^2 \nonumber \\
		& \le C\dt^2 \| u_{ttt} \|_{L^{\infty}(t^{n-1},t^{n+1},L^2)}  \|  \phi_h^{n+1}\| 
		+ \dt^2| (u_{tt}(t^{**}) \cdot \nabla u^{n+1}, \phi_h^{n+1}) |
		+ |(p^{n+1} , \nabla \cdot \phi_h^{n+1}) |
		  \nonumber
		\\ & + \nu| (\nabla \eta^{n+1}, \nabla \phi_h^{n+1}) |
		 + \mu | (I_H \phi_h^{n+1} - \phi_h^{n+1}, \phi_h^{n+1}) |
		 + \mu | (I_H \eta^{n+1}, \phi_h^{n+1}) |  \nonumber
		 \nonumber \\
		& + CM^2 \nu^{-1} h^{2k}U^2 (1 + h^{2k}U^2)
		+ \gamma |(\nabla \cdot \eta^{n+1}, \nabla \cdot \phi_h^{n+1})| .
		\label{bdf2diff2}
	\end{align}
	Next we use the assumptions on $\Delta t$ and $\mu$, and apply bounds to the remaining right hand side terms just as in the backward Euler proof to get 
\begin{align}
		\frac{1}{2\dt} & [\|\psi_\phi^{n+1}\|_G^2 - \|\psi_\phi^{n}\|_G^2] 
		+ \alpha \|\nabla \phi_h^{n+1}\|^2 
		+ \frac{\gamma}{2} \|\nabla \cdot \phi_h^{n+1}\|^2 \nonumber \\
		& \le C\nu^{-1}(1+M^2) \dt^4 
		+ C h^{2k} \bigg(  \gamma^{-1}  P^2
		+  (\nu + \gamma + M^2 \nu^{-1} + M^2 \nu^{-1}h^{2k}U^2 + \nu C_I^{-2} ) U^2 \bigg) \nonumber \\
		& = R.
		\label{bdf2diff3}
\end{align}
This implies, with Poincar\'e's inequality that
\[
\|\psi_\phi^{n+1}\|_G^2 + 2C_l^2 \Delta t \alpha C_P^{-2} \| \phi_h^{n+1} \|^2 
\le 
\|\psi_\phi^{n}\|_G^2 
+ \Delta t R.
\]
From here, we can proceed just as in to the BDF2 long time stability proof above to obtain
\[
\|\psi_\phi^{n+1}\|_G^2 \le \|\psi_\phi^{0}\|_G^2 \left( \frac{1}{1+\lambda\Delta t} \right)^{n+1} +  \frac{R}{\lambda},
\]
and now the triangle inequality and G-norm equivalence complete the proof.
\end{proof}

\section{Numerical Experiments}

We now present results of three numerical tests that illustrate the theory above, and also show the importance of a careful 
choice of discretization.  That is, while the DA theory at the PDE level is critical, in a discretization there are additional consideration
and restrictions that can make the difference of a simulation succeeding or failing.   All of our tests use Algorithm \ref{bdf2alg1}, i.e. the BDF2 IMEX-FEM algorithm studied in section 4.

\subsection{Numerical Experiment 1: Convergence to an analytical solution}

For our first experiment, we illustrate the convergence theory above for Algorithm \ref{bdf2alg1} to the chosen analytical solution on $\Omega=(0,1)^2$,
\begin{align*}
u(x,y,t)& = (\cos(y+t), \sin(x-t))^T, 
\\ p(x,y,t) &= \sin(2\pi(x+t)).
\end{align*}
We take $\nu=0.01$, the forcing function $f$ is calculated from the continuous NSE, $\nu$, and the solution, and the initial velocity is taken to be $u_0=u(x,y,0)$.


We compute on a uniform mesh with Taylor-Hood elements and Dirichlet boundary conditions, and for simplicity we take $\gamma=0$, since the grad-div stabilization has little effect in this test problem.  The interpolation operator $I_H$ is chosen to be the nodal interpolant onto constant functions on the same mesh used for velocity and pressure, and the initial conditions for the DA algorithm are set to zero.

\begin{figure}[!ht]
\begin{center}
	\includegraphics[width = .67\textwidth, height=.45\textwidth,viewport=0 0 725 500, clip]{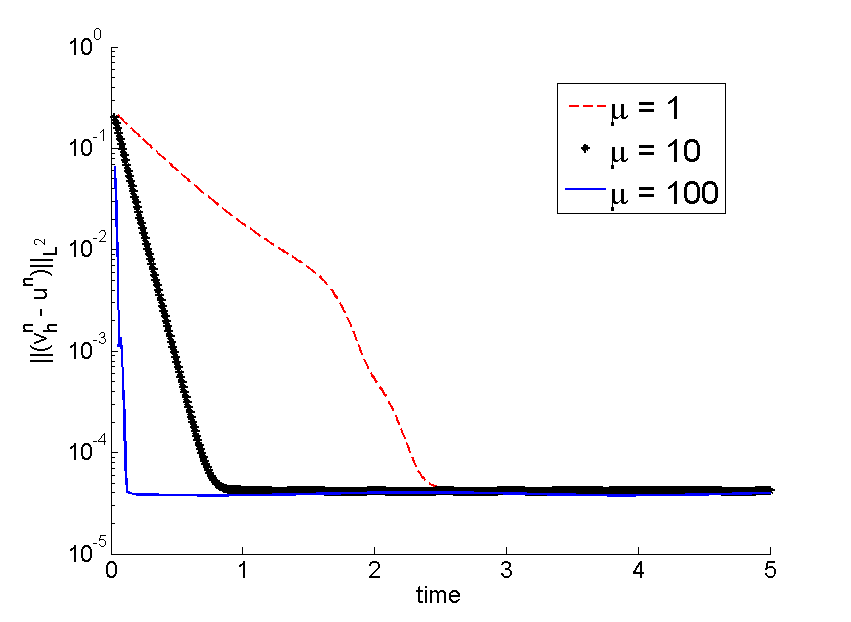}
	\caption{\label{MU}Shown above is a log-linear plot of convergence of the DA computed solutions to the true solution with increasing time $t$, for varying choices of the nudging parameter $\mu$. }
\end{center}
\end{figure}


Results are shown in Figure \ref{MU}, for $\mu=1,\ 10,\ 100$ using $h=\frac{1}{32}$ and $\Delta t=0.01$, by plotting the $L^2(\Omega)$ difference between the DA computed solution and the true solution versus time.  We observe convergence up to about $10^{-4}$, which is the level of the discretization error for the chosen time step and mesh size.  We observe that for larger choices of $\mu$, convergence to the true solution (up to discretization error) is much faster.  However, we note that the long time accuracy is not affected by $\mu$.

Table \ref{hconv} displays the convergence rates of Algorithm \ref{bdf2alg1} solutions to the true solution; error is calculated using the $L^2(\Omega)$ norm at the final time.  For these calculations, we take $\mu = 10$ and $\gamma = 1.0$ and run to an end time of $T = 4.0$. When observing the spacial convergence rates, we fix $\dt = 0.001$ and vary $h$, while for the temporal error we fix $h=\frac{1}{64}$ and vary $\Delta t$. We also test spacial and temporal convergence together, by reducing $h$ and $\Delta t$, but keeping the ratio $4h=\Delta t$.  In all cases we observe second order convergence for spacial and temporal error, which is consistent with our analysis.

\begin{table}[h!]
	{\small
		\begin{minipage}[b]{0.25\linewidth}
			\centering
			\begin{tabular}{|c | c | c |}
				\hline
				$h$ & Error & Rate \\ \hline
				1/4 & 4.12E-3 & - \\ \hline
				1/8 & 5.16E-4 & 3.00 \\ \hline
				1/16 & 5.91E-5 & 3.13 \\ \hline
				1/32 & 8.71E-6 & 2.76 \\ \hline
				1/64 & 1.92E-6 & 2.18 \\ \hline
				1/128 & 4.75E-7 & 2.02 \\ \hline
			\end{tabular}
		\end{minipage}
		\hspace{1cm}
		\begin{minipage}[b]{0.25\linewidth}
			\centering
			\begin{tabular}{| c | c | c |} 
				\hline
				$\dt$	&	Error 	& 	Rate	\\ \hline 	
				1	&	2.60E-3	& 	- 	\\ \hline	
				1/2	&	3.63E-4	& 	2.84	\\ \hline	
				1/4	&	6.84E-5	& 	2.41	\\ \hline	
				1/8	&	1.52E-5	& 	2.17	  \\ \hline	
				1/16	&	3.76E-6	& 	2.02	\\ \hline	
				1/32	&	1.09E-6	& 	1.78	 \\ \hline	
			\end{tabular}
		\end{minipage}
		\hspace{0.7cm}
		\begin{minipage}[b]{0.3\linewidth}
			\centering
			\begin{tabular}{| c | c | c | c |}
				\hline
				$h$	&	$\dt$	&	Error	&	Rate	\\ \hline	
				1/4   	&	1	&	4.69E-3	&	-	\\ \hline	
				1/8   	&	1/2	&	5.79E-4	&	3.02	\\ \hline	
				1/16   	&	1/4	&	9.16E-5	&	2.66	\\ \hline	
				1/32   	&	1/8	&	1.83E-5	&	2.32	\\ \hline	
				1/64   	&	1/16	&	4.38E-6	&	2.06	\\ \hline	
				1/128   	&	1/32	&	1.09E-6	&	2.00	\\ \hline	
			\end{tabular}
	\end{minipage}}
	\caption{\label{hconv} Convergence rates of Algorithm \ref{bdf2alg1} to the true solution with decreasing $h$ and fixed $\Delta t$ (left), fix $h$ and decreasing $\Delta t$ (middle), and also decreasing $h$ and $\Delta t$ at the same rate with $\Delta t=4h$ (right).}
\end{table}

\subsection{Numerical Experiment 2: The no-flow test and pressure-robustness}

For our second test, we show how pressure robustness of the discretization can have a dramatic
impact on the DA solution.  The test problem we consider is the so-called `no-flow test', where the forcing function of 
the NSE is given by $Ra(0,y)^T$, where $Ra>0$ is a dimensionless constant (the Rayleigh number), and with $Pr>0$ denoting the dimensionless Prandtl number:
\begin{align}
\frac{1}{Pr}( u_t + u\cdot \nabla u  ) + \nabla p - \Delta u &= Ra(0,y)^T, \label{NF1} \\
\nabla \cdot u &=0, \label{NF2} \\
u |_{\partial\Omega} &= 0. \label{NF3}
\end{align}
This test problem corresponds to the physical situations of temperature driven flow (i.e. the Boussinesq system), with the temperature $\theta$ profile specified to be stratified, i.e. $f=Ra \theta e_2$ with $\theta=y$.  Linear stratification is a natural steady state temperature profile.  Since the forcing is potential, the solution to the system \eqref{NF1}-\eqref{NF3} with $u_0=0$ initial condition is given by
\[
u=0, \ p = \frac{Ra}{2} y^2,
\]
for any $Pr>0$, hence the name no-flow.

We consider Algorithm \ref{bdf2alg1}, applied to the no-flow test with $Pr=1$ and $Ra=10^5$ (although this may seem like a large choice of a constant, for Boussinesq problems of practical interest, this choice of $Ra$ is actually quite small).  We use both Scott-Vogelius (SV) elements and Taylor-Hood (TH) elements, on a barycenter refined uniform discretization of the unit square with $h=\frac{1}{32}$.  With TH elements, we use $\gamma=0,1,10$.
We take $I_H$ to be the nodal interpolant in $X_h$, and nudging parameter $\mu=0.1$.  The time step size is chosen to be $\Delta t=0.025$, and solutions are computed up to end time $T=0.8$, using the $X_h$ interpolant of $(x\cos y,-\sin y)^T$ for $v_h^0$, and $v_h^1$ is calculated from taking one step of the backward Euler DA scheme.

\begin{figure}[!ht]
	\centering
	\includegraphics[width = .7\textwidth, height=.4\textwidth,viewport=0 0 600 450, clip]{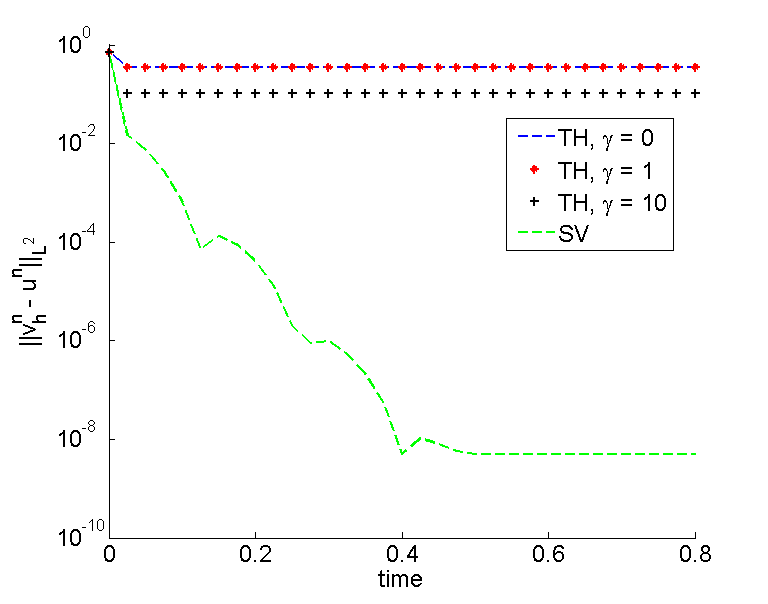}
	\caption{\label{Ra5mu1} Shown above is error in DA solutions for the no-flow solution, with SV element and TH elements (with varying $\gamma$).  TH elements yield an error on the order of $10^-1$, which we consider to be non-convergent, while SV elements perform seven orders of magnitude better than TH, converging with an error on the order of $10^{-8}$ after 20 time steps.}
\end{figure}


Results of the simulations are shown in Figure \ref{Ra5mu1}, as $L^2(\Omega)$ error versus time, and we observe a dramatic difference between solutions from the two element choices.  For TH elements, the results are poor due to the large pressure, which adversely affects the velocity error, even using the identity operator, which is the best possible choice of interpolation operator.
With $\gamma=10$, the TH solution has slightly lower error, however, it is still on the order of $10^{-1}$ accuracy, which we consider to be non-convergent.  On the other hand, the SV solution performs orders of magnitude better, decaying roughly at an exponential rate in time until it reaches a level around $10^{-8}$ and stays there.  Thus we observe here that in DA algorithms, 
element choice can be critical for obtaining accurate results in certain simulations, at least those of Boussinesq type.

\subsection{Numerical Experiment 3: 2D channel flow past a cylinder}

For our last experiment, we consider Algorithm \ref{bdf2alg1} applied to the common benchmark problem of 2D channel flow 
past a cylinder with Reynolds number 100  \cite{ST96}.  The domain is a $2.2\times0.41$ rectangular channel with a cylinder of radius $0.05$ centered at $(0.2,0.2)$, see Figure \ref{cylinderdomain}. There is no external forcing, the kinematic viscosity is taken to be $\nu=0.001$, no-slip boundary conditions are prescribed for the walls and the cylinder, while the inflow and outflow profiles are given by 
\begin{align*}
u_1(0,y,t) & = u_1(2.2,y,t) = \frac{6}{0.41^2}y(0.41-y),
\\u_2(0,y,t) & = u_2(2.2,y,t) = 0.
\end{align*}
\begin{figure}[!ht]
	\centering
	\includegraphics[scale = .8]{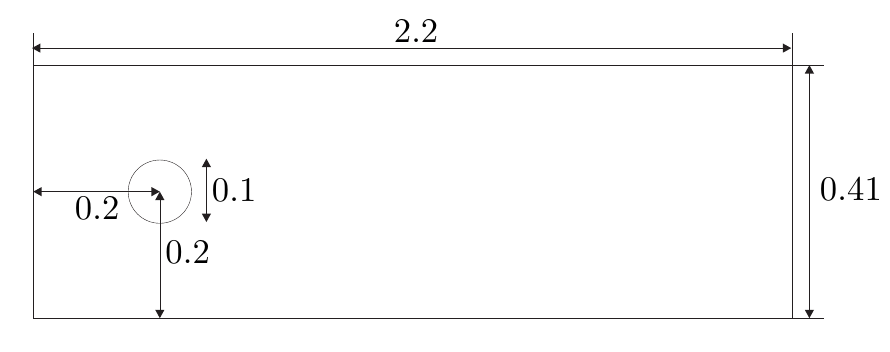}
	\caption{\label{cylinderdomain} Shown above is the domain for the flow past a cylinder test problem.}
\end{figure}

Since we do not have access to a true solution for this problem, we instead use a computed solution.  It is computed using the same BDF2-IMEX-FEM scheme as in Algorithm \ref{bdf2alg1} but without nudging (i.e. $\mu=0$), using $(P_2,P_1^{disc})$ Scott-Vogelius elements on a barycenter refined Delaunay mesh that provides 60,994 total degrees of freedom, a time step of $\Delta t=0.002$, and with the simulation starting from rest ($u_h^0=u_h^{-1}=0$).  We will refer to this solution as the DNS solution.  Lift and drag calculations were performed for the computed solution and compared to the literature \cite{ST96,MRXI17}, which verified the accuracy of the DNS.

For the lift and drag calculations, we used the formulas
\begin{align*}
c_d(t) &= 20\int_S \left( \nu \frac{\partial u_{t_S}(t)}{\partial n}n_y - p(t)n_x \right)dS,
\\c_l(t) &= 20 \int_S \left( \nu \frac{\partial u_{t_S}(t)}{\partial n}n_x - p(t)n_y \right)dS,
\end{align*}
where $p(t)$ is the pressure, $u_{t_S}$ the tangential velocity $S$ the cylinder, and $n = \langle n_x, n_y\rangle$ the outward unit normal to the domain. For calculations, we use the global integral formula from \cite{J04}.

For the DA algorithm, we start from $v_h^1 = v_h^0=0$, choose $\mu=10$, use the same spacial and temporal discretization parameters as the DNS, and begin assimilating with t=5 DNS solution (so time 0 for DA corresponds to t=5 for the DNS).   For the interpolant, we use constant interpolation on a mesh that is one refinement coarser, i.e. on the Delaunay mesh without the barycenter refinement.  The number of velocity degrees of freedom for constant functions on the coarse mesh is just 5,772.   The simulation is run on [0,5] (so the actual corresponding times for the DNS would be [5,10]).

\begin{figure}[!ht]
	\centering
	\includegraphics[width = .7\textwidth]{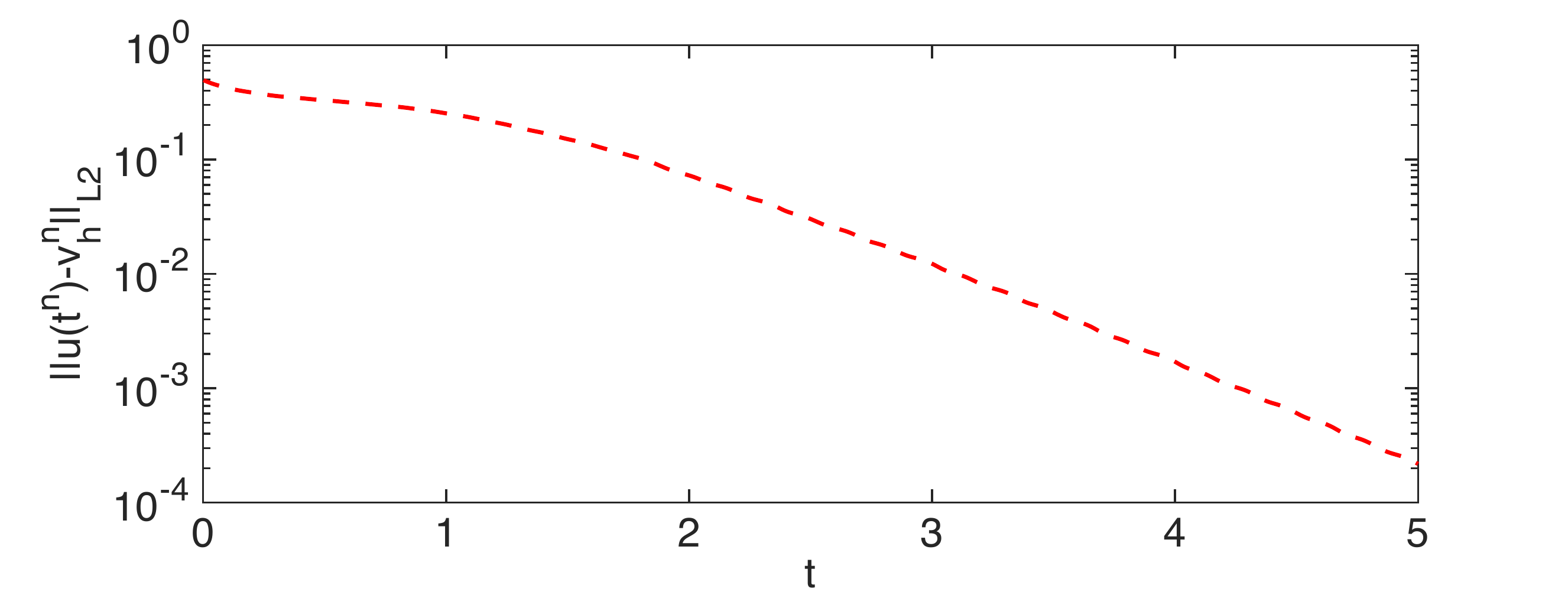}
	\includegraphics[width = .7\textwidth]{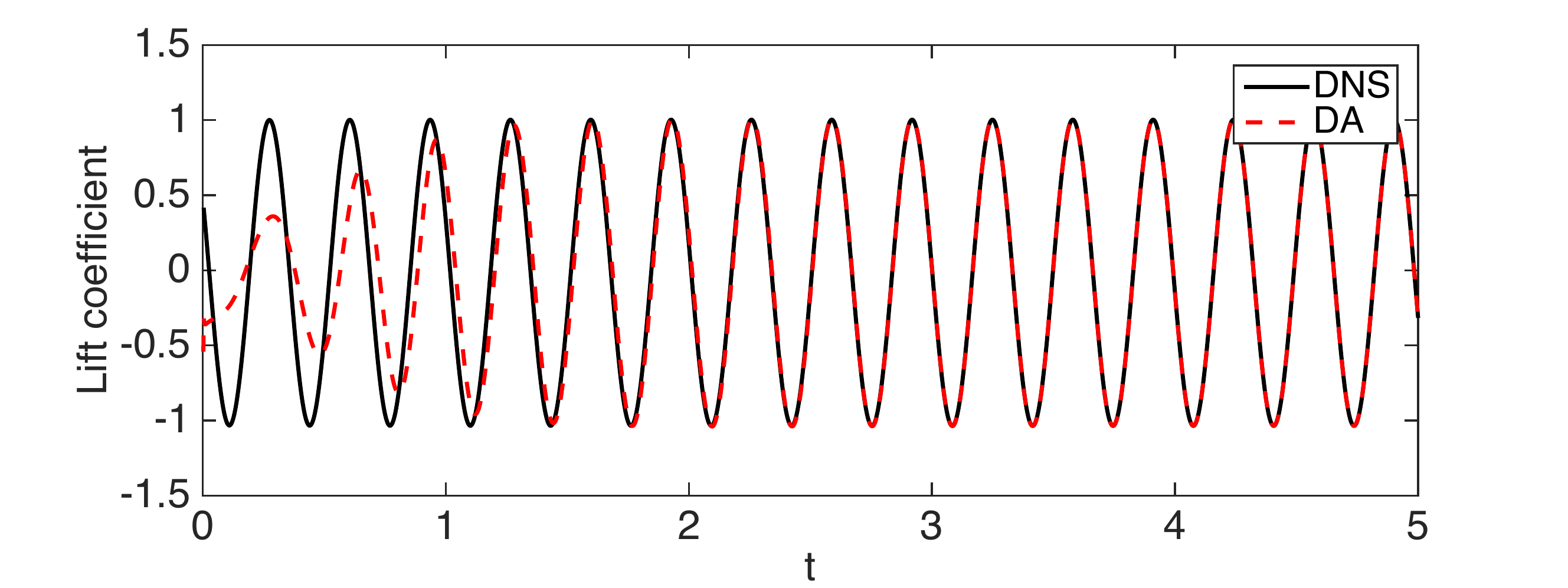}	
	\includegraphics[width = .7\textwidth]{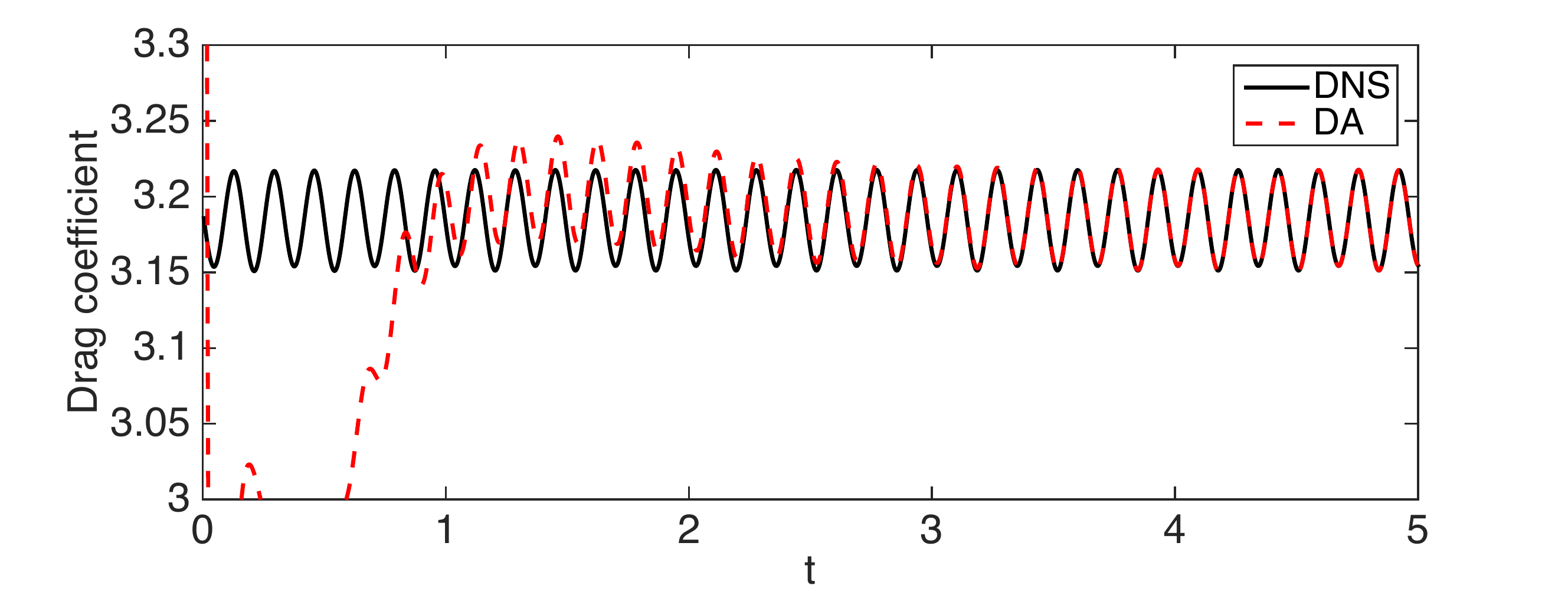}	
	\caption{\label{difference} Shown above is the difference between the DA and DNS versus time, as $L^2$ difference (top), difference in lift coefficients (middle), and difference in drag coefficients (bottom). }
\end{figure}

Results are shown in Figures \ref{difference} and \ref{contourcyl}.  In Figure \ref{difference} at the top, we observe 
exponential decay of $\| v_h^n - u^n \|_{L^2}$ in time, as predicted by our theory.  After 5 seconds, the value is near $10^{-4}$ and is continuing to decreases.  Also in this figure we observe the DA lift and drag slowly catch up to and match the DNS lift and drag: for lift, DA and DNS match by about t=2, but for drag it takes almost to t=3 before there are no visual differences in the plot.  The convergence of the DA solution to the DNS solution in time can also be seen in the speed contour plots in Figure \ref{contourcyl}.  Here, at t=0 there is of course a major difference, since the DA simulation starts at 0.  The accuracy of DA is seen to increase by t=0.5 and further by t=1, and finally by t=2 there is only very slight differences observable between DA and DNS plots.  By t=5, there is no visual difference between DA and DNS, which we expect since the $L^2$ difference between the solutions at t=5 is seen in Figure \ref{difference} to be near $10^{-4}$.

\begin{figure}[!ht]
\begin{center}
DA (t=0) \hspace{2.4in} DNS (t=0)\\
\includegraphics[width = .48\textwidth, height=.17\textwidth,viewport=50 20 650 230, clip]{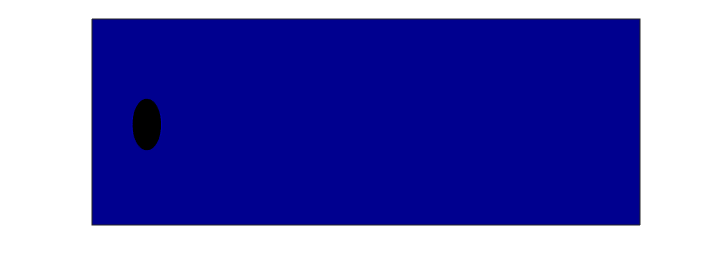}
\includegraphics[width = .48\textwidth, height=.17\textwidth,viewport=50 20 650 230, clip]{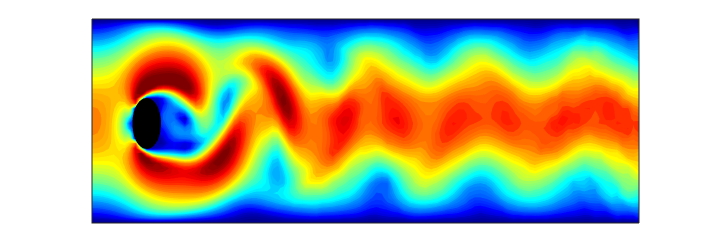}
DA (t=0.5) \hspace{2.4in} DNS (t=0.5)\\
\includegraphics[width = .48\textwidth, height=.17\textwidth,viewport=50 20 650 230, clip]{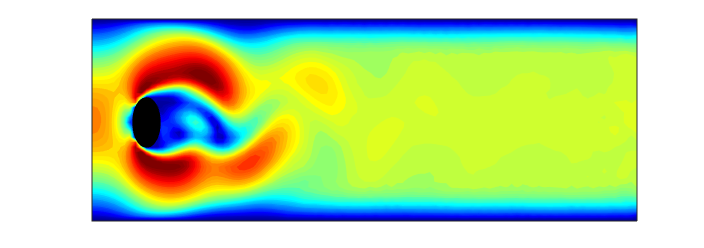}
\includegraphics[width = .48\textwidth, height=.17\textwidth,viewport=50 20 650 230, clip]{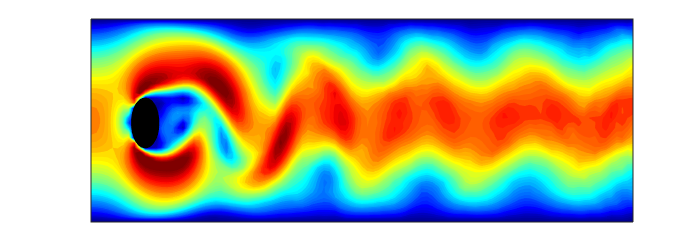}
DA (t=1) \hspace{2.4in} DNS (t=1)\\
\includegraphics[width = .48\textwidth, height=.17\textwidth,viewport=50 20 650 230, clip]{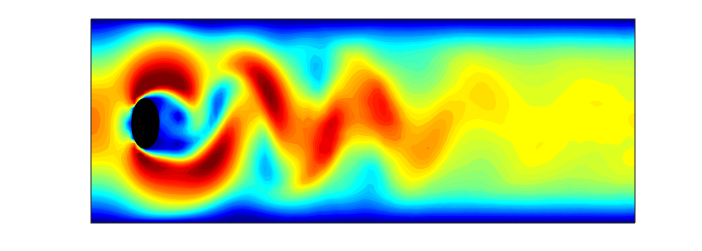}
\includegraphics[width = .48\textwidth, height=.17\textwidth,viewport=50 20 650 230, clip]{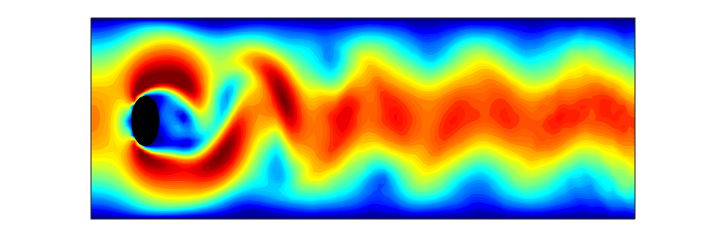}
DA (t=2) \hspace{2.4in} DNS (t=2)\\
\includegraphics[width = .48\textwidth, height=.17\textwidth,viewport=50 20 650 230, clip]{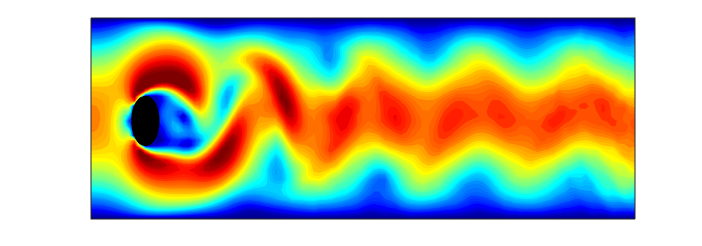}
\includegraphics[width = .48\textwidth, height=.17\textwidth,viewport=50 20 650 230, clip]{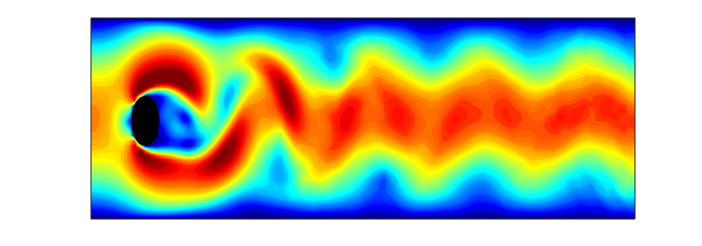}
DA (t=5) \hspace{2.4in} DNS (t=5)\\
\includegraphics[width = .48\textwidth, height=.17\textwidth,viewport=50 20 650 230, clip]{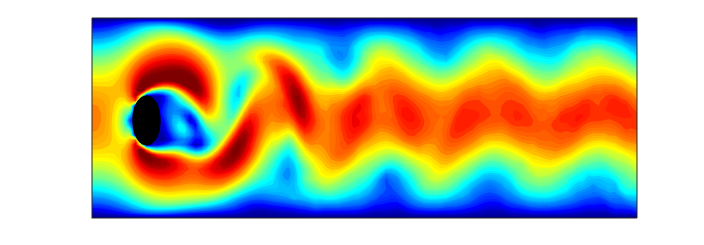}
\includegraphics[width = .48\textwidth, height=.17\textwidth,viewport=50 20 650 230, clip]{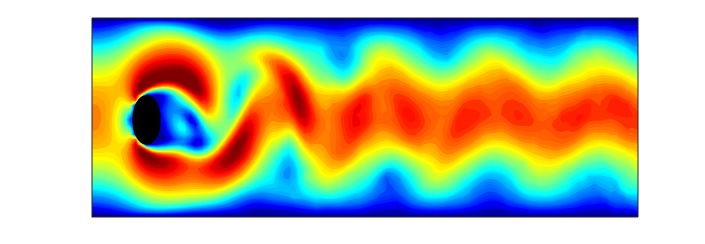}
	\caption{\label{contourcyl} Contour plots of DA and DNS velocity magnitudes at times 0, 0.5, 1, 2, and 5.}
	\end{center}
\end{figure}

\section{Conclusions and Future Directions}

We have analyzed and tested IMEX-finite element schemes for NSE with data assimilation.  Under assumptions that the discretization parameters are sufficiently small, and the NSE solution is sufficiently regular, we proved convergence of the discrete solution to the NSE solution.  Under the assumption of global well-posedness of the NSE solution, our result proves long-time accuracy of the discrete solution.  Several numerical tests were given to show the effectiveness of the scheme, and in particular we found that the element choice can make a dramatic difference in accuracy.  Future directions include considering this approach for related coupled systems, and also to consider long time accuracy in higher order norms.

\end{document}